\author{Tirthankar Bhattacharyya and Abhay Jindal}
\address{Department of Mathematics, 	Indian Institute of Science, \\
	Bangalore 560012, India}
\email{tirtha@iisc.ac.in; abjayj@iisc.ac.in}
\newcommand{\bydef}{\stackrel{\rm def}{=}}
\newcommand{\cB}{{\mathcal B}}
\newcommand{\cD}{{\mathcal D}}
\newcommand{\cE}{{\mathcal E}}
\newcommand{\cF}{{\mathcal F}}
\newcommand{\cH}{{\mathcal H}}
\newcommand{\cK}{{\mathcal K}}
\newcommand{\cM}{{\mathcal M}}
\newcommand{\cO}{{\mathcal O}}
\newcommand{\la}{\langle}
\newcommand{\ra}{\rangle}
\newcommand{\bD}{{\mathbb D}}
\newcommand{\bfT}{\textit{\textbf{T}}}
\newcommand{\bfR}{\textit{\textbf{R}}}
\newcommand{\bfM}{\textit{\textbf{M}}}
\newcommand{\bfZ}{\textit{\textbf{Z}}}
\newcommand{\bfS}{\textit{\textbf{S}}}
\newcommand{\bfB}{\textit{\textbf{B}}}
\newtheorem{thm}{Theorem}[section]
\newtheorem{corollary}[thm]{Corollary}
\newtheorem{lemma}[thm]{Lemma}
\newtheorem{proposition}[thm]{Proposition}
\newtheorem{definition}[thm]{Definition}
\newtheorem{remark}[thm]{Remark}
\newtheorem*{theorem*}{Theorem}
\newtheorem*{definition*}{Definition}
\numberwithin{equation}{section}
\def\textmatrix#1&#2\\#3&#4\\{\bigl({#1 \atop #3}\ {#2 \atop #4}\bigr)}
\def\dispmatrix#1&#2\\#3&#4\\{\left({#1 \atop #3}\ {#2 \atop #4}\right)}
\numberwithin{equation}{section}
\def\textmatrix#1&#2\\#3&#4\\{\bigl({#1 \atop #3}\ {#2 \atop #4}\bigr)}
\def\dispmatrix#1&#2\\#3&#4\\{\left({#1 \atop #3}\ {#2 \atop #4}\right)}
\begin{document}
	
	\title[Characteristic Functions]{Complete Nevanlinna-Pick kernels And The Characteristic Function}
	\maketitle
	\begin{abstract}
		This note finds a new characterization of complete Nevanlinna-Pick kernels on the Euclidean unit ball.
		
		The classical theory of Sz.-Nagy and Foias about the characteristic function is extended in this note to a commuting tuple $\bfT$ of bounded operators satisfying the natural positivity condition of $1/k$-contractivity for an irreducible unitarily invariant complete Nevanlinna-Pick kernel. The characteristic function is a multiplier from $H_k \otimes \cE$ to $H_k \otimes \cF$, {\em factoring} a certain positive operator, for suitable Hilbert spaces $\cE$ and $\cF$ depending on $\bfT$. There is a converse, which roughly says that if a kernel $k$ {\em admits} a characteristic function, then it has to be an irreducible unitarily invariant complete Nevanlinna-Pick kernel. The characterization explains, among other things, why in the literature an analogue of the characteristic function for a Bergman contraction ($1/k$-contraction where $k$ is the Bergman kernel), when viewed as a multiplier between two vector valued reproducing kernel Hilbert spaces, requires a different (vector valued) reproducing kernel Hilbert space as the domain.

	\end{abstract}

Keywords: Complete Nevanlinna-Pick (CNP) kernel, Characteristic function, Generalized Bergman kernel.

		\let\thefootnote\relax\footnotetext{MSC 2020: 47A45; 47A13; 46E22.}
	
	
	\section{Introduction}
	This section sets up notations and tools that will be used throughout this note. Results start from Section \ref{ExistCHFN}.
	
	\subsection{Unitarily invariant kernels}
	A {\em unitarily invariant kernel} on the open Euclidean unit ball
	$$\mathbb{B}_{d} = \{\bm z = (z_1, \ldots z_d) \in \mathbb C^d : \| \bm{z}\| \bydef (|z_1|^2 + \cdots + |z_d|^2)^{1/2} < 1\}$$
	is a reproducing kernel  $k$ of the form
	\begin{equation}\label{def k}
		k(\bm{z},\bm{w})=\sum\limits_{n=0}^{\infty}a_{n} \langle \bm{z},\bm{w}\rangle^{n} \hspace{5mm}  (\bm{z}, \bm{w} \in \mathbb{B}_{d})	
	\end{equation}
	for some sequence of strictly positive coefficients $\{a_{n}\}_{n\geq 0}$ with $a_{0} =1$. The corresponding reproducing kernel Hilbert space, to be denoted by $H_{k}$, is called a {\em unitarily invariant space}.  The {\em generalized Bergman kernels}
	\begin{equation} \label{km}
		k_{m}(\bm{z} , \bm{w}) = \left( \frac{1}{1 - \langle \bm{z} , \bm w \rangle } \right)^m; \; m=1,2, \ldots .
	\end{equation}
	are  examples of unitarily invariant kernels.  For $m=1,$ $k_{m}$ is called the Drury-Arveson kernel. The Dirichlet kernel
	$$ k(z,w) = \sum\limits_{n=0}^{\infty} \frac{1}{n+1} z^{n}\overline{w}^{n} \hspace{5mm} (z,w \in \bD )$$
	is a unitarily invariant kernel and a motivating example in this note.

	Since $a_{0} =1,$ there exists a sequence of real numbers $\{b_{n}\}_{n=1}^{\infty}$ such that
	\begin{equation}\label{bn}
		\sum\limits_{n=1}^{\infty} b_{n} \la \bm{z}, \bm{w} \ra^{n} = 1- \frac{1}{\sum\limits_{n=0}^{\infty} a_{n}\la \bm{z}, \bm{w}\ra^{n}} \hspace{5mm} \text{for $\bm{z}, \bm{w} \in \mathbb{B}_{d}$}	
	\end{equation}
	such that $\|\bm{z}\| <1$ and $\|\bm{w}\|<\epsilon$ for some $\epsilon>0$. Moreover, if $k$ is {\em non-vanishing} then this equality holds for all $\bm{z},\bm{w} \in \mathbb{B}_{d}.$ For any $\epsilon > 0,$
	$${\rm span} \{ k_{\bm{w}}:\bm{w}\in\mathbb{B}_{d}, \|\bm{w}\| < \epsilon \}$$ of the set of {\em kernel functions} defined as
	$$
	k_{\bm w}(\bm z) = k(\bm{z},\bm{w}) \hspace{5mm} \text{ for } \bm z , \bm{w} \in \mathbb{B}_{d}	
	$$
	is dense in $H_{k}.$
	
	Let $\mathbb{Z}_{+}$ denote the set of all non-negative integers.
	Given $\alpha\in\mathbb{Z}^{d}_{+}$ and $\bm{z}\in \mathbb{C}^{d},$ we use the usual multi-index notations:
	$$\alpha= (\alpha_{1}, \ldots ,\alpha_{d}), \hspace{5mm} |\alpha|=\alpha_{1}+ \cdots +\alpha_{d}, \hspace{5mm} \alpha !=\alpha_{1}! \ldots \alpha_{d}!$$ and
	$$\binom{|\alpha|}{\alpha} = \frac{|\alpha|!}{\alpha_{1}!\dots\alpha_{d}!}, \hspace{10mm} \bm z^{\alpha} = z_{1}^{\alpha_{1}}  \ldots z_{d}^{\alpha_{d}}.$$
	We set
	\begin{equation}\label{a_alpha}
		a_{\alpha} = \begin{cases}  a_{|\alpha|} \binom{|\alpha|}{\alpha}, & \alpha\in\mathbb{Z}^{d}_{+} \\ 0, & \alpha\in\mathbb{Z}^{d} \backslash \mathbb{Z}^{d}_{+} \end{cases} \text{ for } \alpha\in\mathbb{Z}^{d}, \text{ and }
		b_{\alpha} = b_{|\alpha|} \binom{|\alpha|}{\alpha} \text{ for } \alpha\in\mathbb{Z}^{d}_{+} \backslash \{0\}.
	\end{equation}
	With the help of notations in $\eqref{a_alpha}$, a unitarily invariant kernel $k,$ defined in $\eqref{def k}$ can be written as $$k(\bm{z},\bm{w}) = \sum\limits_{\alpha\in\mathbb{Z}^{d}_{+}} a_{\alpha} \bm{z}^{\alpha} \overline{\bm{w}^{\alpha}} .$$ It is clear that the monomials $\{\bm z^{\alpha}\}_{\alpha\in\mathbb{Z}^{d}_{+}}$ form an orthogonal basis for a unitarily invariant space $H_{k}$ and that $$ \|\bm z^{\alpha}\|^{2}_{H_{k}} = \frac{1}{a_{\alpha}}$$	
	for all $\alpha\in\mathbb{Z}^{d}_{+}.$
	
	For a Hilbert space $\cE$, let $\cO(\mathbb{B}_{d}, \cE)$ be the class of all holomorphic $\cE$-valued functions on $\mathbb{B}_{d}$. Then the vector valued Hilbert space $H_{k}(\cE)$ is defined as
	\begin{align*}H_{k}(\cE) := \bigg\{f\in\cO(\mathbb{B}_{d}, \cE) & : f(\bm z) = \sum\limits_{\alpha\in\mathbb{Z}^{d}_{+}} c_{\alpha} \bm z^{\alpha}, c_{\alpha} \in \cE \\
		\text{ and } & \|f\|^{2}:= \sum\limits_{\alpha\in\mathbb{Z}^{d}_{+}} \|c_{\alpha}\|^{2} \|\bm z^{\alpha}\|^{2} < \infty \bigg\}.
	\end{align*}
	As a Hilbert space, $H_{k}(\cE)$ is the same as $H_{k} \otimes \cE,$ the identification being
	$$ \sum\limits_{\alpha\in\mathbb{Z}^{d}_{+}} c_{\alpha} \bm z^{\alpha} \rightarrow  \sum\limits_{\alpha\in\mathbb{Z}^{d}_{+}} (\bm{z}^{\alpha} \otimes c_{\alpha}). $$

	\subsection{$1/k-$contractions}
	We shall denote a commuting $d-$tuple of bounded operators $(T_{1}, \dots, T_{d})$ by $\bfT.$ For $\alpha\in\mathbb{Z}^{d},$ we set $$\bfT^{\alpha} = \begin{cases}  T_{1}^{\alpha_{1}} \dots T_{d}^{\alpha_{d}}, & \alpha\in\mathbb{Z}^{d}_{+} \\ I, & \alpha\in\mathbb{Z}^{d} \backslash \mathbb{Z}^{d}_{+} \end{cases} .$$

	The following definition, motivated by \eqref{bn} in the context of a unitarily invariant kernel on $\mathbb{B}_{d}$ is from \cite{CH} and is of crucial use to us.
	\begin{definition}
		Let $k$ be a unitarily invariant kernel on $\mathbb{B}_{d}$ and $\bfT$ be a commuting $d-$tuple of bounded operators such that the series $$\sum\limits_{\alpha\in\mathbb{Z}^{d}_{+} \backslash \{0\}} b_{\alpha} \bfT^{\alpha}(\bfT^\alpha)^{*}$$ converges in strong operator topology. The $d-$tuple $\bfT$ is called a $1/k$-contraction if$$ I - \sum\limits_{\alpha\in\mathbb{Z}^{d}_{+} \backslash \{0\}} b_{\alpha} \bfT^{\alpha}(\bfT^{\alpha})^{*} \geq 0.$$ In this case, we shall denote the unique positive square root of the positive operator $ I-\sum\limits_{\alpha\in\mathbb{Z}^{d}_{+} \backslash \{0\}} b_{\alpha} \bfT^{\alpha}(\bfT^{\alpha})^{*}$ by $\Delta_{\bfT}.$
	\end{definition}
	While we shall be content with the definition above, a vast generalization of this definition, applicable to other domains, appeared in \cite{AE}. A contraction $T$ and a {\em commuting contractive tuple} (or a $d$-contraction in Arveson's terminology, see page 175 of \cite{curv} for example) $\bfT = (T_{1},  \hdots,T_{d})$ are $1/k$-contractions when $k$ is the Szeg\H{o} kernel (on $\bD$) and the Drury Arveson kernel (on $\mathbb{B}_{d}$) respectively.
	
	\begin{definition}\label{pure}
		A $1/k-$contraction $\bfT = (T_{1}, \hdots,T_{d})$ is called pure if the series  $$\sum\limits_{\alpha\in\mathbb{Z}^{d}_{+}} a_{\alpha} \bfT^{\alpha} \Delta_{\bfT}^{2}(\bfT^{\alpha})^{*}$$ converges strongly to $I.$
	\end{definition}
	
	If $k$ is the Drury-Arveson kernel and $\bfT$ is a $d$-contraction, then the series above converges strongly to $I$ if and only if $\bfT$ is pure in the sense of Definition 3.1 of \cite{BES}. The compression of a pure $1/k$-contraction to a co-invariant subspace is a pure $1/k$-contraction. Theorem \ref{V_T} will imply that any pure $1/k$-contraction is, in fact, the compression to a co-invariant subspace of a special $1/k$-contraction.
	
	\subsection{Admissible kernels}
	Let $\cE$ and $\cF$ be two Hilbert spaces. For a reproducing kernel Hilbert space $H_k$ of holomorphic functions on $\mathbb{B}_{d}$, a {\em multiplier} from $H_{k} \otimes \cE$ to $H_{k} \otimes \cF$ is a $\cB(\cE, \cF)-$valued function $\varphi$ on $\mathbb{B}_{d}$ with the property that if $f$ is in $H_{k}\otimes \cE$, then $\varphi f$ is in $H_{k}\otimes \cF$. For such a $\varphi$, we shall let $M_{\varphi}$ denote the operator of multiplication by $\varphi$. An application of the Closed Graph Theorem proves $M_{\varphi}$ to be a bounded operator. The set of all such functions is denoted by $Mult(H_{k} \otimes \cE, H_{k} \otimes \cF)$. Since the multiplication operators by coordinate functions will serve as the model operators, we need to restrict the class of kernels we work with.
	
	\begin{definition}
		A unitarily invariant kernel $k$ is called admissible if the operators of multiplication by the co-ordinate functions $M_{z_i}$ are bounded operators on $H_{k}$ for $i=1,\ldots , d$ and the $d-$tuple $\bfM_{\bm{z}} = (M_{z_{1}}, \hdots,M_{z_{d}})$ is a $1/k$-contraction.
	\end{definition}
	
	The $M_{z_i}$ will sometimes be referred to as the \emph{shift operators}. The name is self-explanatory. The generalized Bergman kernels $k_{m}$ defined in \eqref{km} are examples of admissible kernels, see \cite{MV}. {\em In this paper, $k$ will always denote an admissible kernel}.
	
	\subsection{Complete Nevanlinna-Pick kernels}
	Complete Nevanlinna-Pick kernels arose out of a question of Quiggin. In \cite{Quiggin}, he asked for a characterization of reproducing kernel Hilbert spaces on which Pick's theorem was true. The answer came a year later when McCullough showed in \cite{McC} that there is a characterization if one considers Nevanlinna-Pick interpolation problem for matrix-valued functions, viz., that the inverse of the kernel has only one positive square. Starting with \cite{AMpaper} by Agler and McCarthy, the complete Nevanlinna-Pick kernels have been of constant interest over the last two decades - \cite{Richter Adv}, \cite{AHMcR}, \cite{CHS}, \cite{JKM} and \cite{McT} are representative publications. This list is by no means exhaustive and is growing. We shall see a new characterization of irreducible unitarily invariant complete Nevanlinna-Pick kernels on the Euclidean unit ball in Section \ref{CNPcharacterization}.

	\begin{definition}
		A reproducing kernel $s$ is said to have the $M_{m\times n}$ Nevanlinna-Pick property if, whenever $\bm{\lambda}_{1}, \ldots ,\bm{\lambda}_{N}$ are points in $\mathbb{B}_{d}$ and $W_{1}, \ldots ,W_{N}$ are $m$-by-$n$ matrices such that $$(I-W_{i}W_{j}^{*}) s(\bm{\lambda}_{i}, \bm{\lambda}_{j}) \geq 0,$$ then there exists a multiplier $\varphi$ in the closed unit ball of $$Mult(H_{s} \otimes \mathbb{C}^{n}, H_{s} \otimes \mathbb{C}^{m})$$ such that $\phi(\bm{\lambda}_{i})=W_{i},i=1, \ldots ,N.$ The kernel $s$ is said to have the complete Nevanllina-Pick property (CNPP) if it has the $M_{m \times n}$ Nevanlinna-Pick property for all positive integers $m$ and $n$ and the corresponding reproducing kernel Hilbert space $H_{s}$ is called a complete Nevanlinna-Pick space.
	\end{definition}
	
	\begin{definition}
		A reproducing kernel Hilbert space $H_{s}$ is irreducible if $s(\bm z, \bm w)\neq 0$ for all $\bm z,\bm w\in \mathbb{B}_{d}$, and $s_{\bm w}$ and $s_{ \bm v}$ are linearly independent if $\bm v\neq \bm w$.
	\end{definition}
	
	One of the main tools for us is the following well-known result. See Lemma 2.3 of \cite{H} for a proof. For one variable, the result is in Lemma 7.33 of \cite{AM}.
	
	\begin{lemma} \label{CNPK}
		Let $H_{s}$ be a unitarily invariant space on $\mathbb{B}_{d}$ with reproducing kernel $$ s(\bm{z},\bm{w}) = \sum\limits_{n=0}^{\infty}a_{n}\langle \bm{z}, \bm{w} \rangle^{n}.$$ Then the following are equivalent:
		\begin{enumerate}
			\item $H_{s}$ is an irreducible complete Nevanlinna-Pick space.
			\item The sequence $\{b_{n}\}_{n=1}^{\infty}$ defined by (\ref{bn}) is a sequence of non-negative real numbers.
		\end{enumerate}
	\end{lemma}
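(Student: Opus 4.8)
The plan is to recognize that condition (2) is precisely the positive semidefiniteness of the kernel $1 - 1/s$, and to route the equivalence through the Agler--McCarthy characterization of complete Nevanlinna--Pick kernels. First I would record the normalization that is special to the unitarily invariant setting: since $a_{0}=1$ and $\la \bm{z}, \bm{0}\ra = 0$, we have $s(\bm{z}, \bm{0}) = s(\bm{0}, \bm{w}) = s(\bm{0}, \bm{0}) = 1$, so the base point $\bm{0}$ already normalizes $s$. Consequently the normalized kernel appearing in the Agler--McCarthy theorem coincides with $s$ itself, and that theorem (see \cite{AMpaper} and \cite{AM}) specializes to the clean statement: an irreducible, non-vanishing $s$ is a complete Nevanlinna--Pick kernel if and only if $1 - 1/s$ is a positive semidefinite kernel on $\mathbb{B}_{d}$.

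By \eqref{bn} we have $1 - 1/s(\bm{z}, \bm{w}) = \sum_{n=1}^{\infty} b_{n} \la \bm{z}, \bm{w}\ra^{n}$, so the entire lemma reduces to the assertion that this kernel is positive semidefinite if and only if every $b_{n}$ is non-negative. The easy direction is (2)$\Rightarrow$(1): each $\la \bm{z}, \bm{w}\ra^{n}$ is positive semidefinite, being an $n$-fold product (Schur power) of the positive kernel $\la \bm{z}, \bm{w}\ra$, so $b_{n}\ge 0$ exhibits $1-1/s$ as a convergent non-negative combination of positive kernels, hence positive. Irreducibility is automatic here because $a_{n}>0$ for all $n$ forces $s$ to be non-vanishing and the kernel functions $s_{\bm{w}}$ to be linearly independent. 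The Agler--McCarthy theorem then yields that $s$ is an irreducible complete Nevanlinna--Pick kernel.

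For (1)$\Rightarrow$(2) the real content is to pass from positive semidefiniteness of the unitarily invariant kernel $\sum_{n} b_{n}\la \bm{z}, \bm{w}\ra^{n}$ back to $b_{n}\ge 0$. I would isolate the coefficients by a rotation and averaging argument: restricting to a single complex line $\bm{z} = z\,\bm{e}_{1}$ reduces matters to the disc kernel $\sum_{n} b_{n} z^{n}\overline{w}^{n}$, and evaluating the (positive) Gram matrix at the points $z_{j}=re^{2\pi i j/N}$ and averaging the resulting quadratic form against the characters $e^{-2\pi i k j/N}$ with $N$ large projects onto the residue class $n\equiv k \pmod N$, producing the non-negative quantity $\sum_{n\equiv k} b_{n} r^{2n}$; letting $r\to 0$ isolates $b_{k}$ and gives $b_{k}\ge 0$. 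Combining the two directions with the normalization of the first paragraph yields the stated equivalence.

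The main obstacle is the Agler--McCarthy characterization itself, whose necessity direction---that the complete Nevanlinna--Pick property forces the ``one positive square'' condition, equivalently the positivity of $1-1/s$---is the genuinely deep input; in this plan I would cite it (as in \cite{H} and \cite{AM}) rather than reprove it. Granting that, the only steps requiring care are the coefficient-extraction argument above and the verification that the unitarily invariant normalization really does make the Agler--McCarthy normalized kernel equal to $s$, so that ``positivity of $1-1/s$'' is literally condition (2).
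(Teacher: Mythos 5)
The paper gives no proof of this lemma at all --- it simply cites Lemma~2.3 of \cite{H} (and Lemma~7.33 of \cite{AM} for $d=1$) --- and your argument follows essentially the same route as that cited proof: use the Agler--McCarthy characterization at the base point $\bm{0}$ (where $s(\cdot,\bm 0)\equiv 1$, so $s$ is already normalized) to reduce the lemma to the statement that $\sum_{n\ge 1} b_n \la \bm{z},\bm{w}\ra^{n}$ is positive semidefinite exactly when every $b_n\ge 0$, then prove the nontrivial implication by restricting to a complex line and averaging the Gram matrix at $z_j=re^{2\pi i j/N}$ against characters to isolate $\sum_{n\equiv k\,(N)}b_n r^{2n}\ge 0$ and let $r\to 0$. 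That coefficient-extraction step is correct, and citing the Agler--McCarthy theorem rather than reproving it is exactly what the sources do.

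There is, however, one genuinely wrong justification. In the direction $(2)\Rightarrow(1)$ you claim that $a_n>0$ for all $n$ forces $s$ to be non-vanishing. Positivity of the Taylor coefficients of $f(t)=\sum a_n t^n$ does not preclude zeros of $f$ in $\bD$ (already $1+t+100t^2$ vanishes at points of modulus $1/10$, and small perturbations keep all coefficients strictly positive), so this step fails as stated --- and it is load-bearing, since irreducibility is a hypothesis of the Agler--McCarthy theorem and $1-1/s$ must be defined everywhere. The correct argument uses hypothesis (2) itself: the recursion $a_n=\sum_{m=1}^{n}b_m a_{n-m}$ of \eqref{coeff rel}, with all terms non-negative, gives $b_n\le a_n$, so $\sum b_n t^n$ converges on $\bD$; the Cauchy-product identity $\bigl(\sum_{n\ge 0} a_n t^n\bigr)\bigl(1-\sum_{n\ge 1} b_n t^n\bigr)=1$ then holds as an identity of analytic functions on $\bD$, which simultaneously shows that $f$ has no zeros there and that the series $\sum b_\alpha \bm{z}^\alpha\overline{\bm{w}^\alpha}$ you call a ``convergent non-negative combination'' really does converge to $1-1/s$ on all of $\mathbb{B}_d$ --- a convergence point you also left implicit. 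With that repair the proof is complete.
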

	
	\begin{definition}
		A reproducing kernel $s$ is called a unitarily invariant complete Nevanlinna-Pick (CNP) kernel if
		\begin{enumerate}
			\item it is of the form $$s(\bm{z}, \bm{w}) = \sum\limits_{n=0}^{\infty} a_{n} \la \bm{z}, \bm{w} \ra^{n} \hspace{5mm} (\bm{z} , \bm{w} \in \mathbb{B}_{d})$$ for a  sequence of strictly positive coefficients $\{a_{n}\}_{n \geq 0}$ with $a_{0} =1,$ and
			\item $H_{s}$ is an irreducible complete Nevanlinna-Pick space.
		\end{enumerate}
	\end{definition}
	
	Unitarily invariant CNP kernels are admissible. See, [\cite{CH}, Lemma 5.2]. {\em In this paper, $s$ will always denote a unitarily invariant CNP kernel}.
	
	In general, the generalized Bergman kernels $k_{m}$ defined in \eqref{km} are not always unitarily invariant CNP. Some basic examples of unitarly invariant CNP kernels are the Drury-Arveson kernel and the Dirichlet kernel.
	
	\begin{definition}
		For any $t\geq 0,$ we define a weighted Dirichlet space, to be denoted by $\cD_{t},$ in the following way
		$$ \cD_{t}:= \bigg\{f\in\cO(\bD,\mathbb{C}): f(z) = \sum\limits_{n=0}^{\infty} c_{n} z^{n} \text{ and } \|f\|^{2}_{\cD_{t}}:= \sum\limits_{n=0} ^{\infty} (n+1)^{t} |c_{n}|^{2} < \infty \bigg\}.$$
	\end{definition}
	Note that $\cD_{1}$ is the same as the Dirichlet space. The space $\cD_{t}$ is a reproducing kernel Hilbert space and the corresponding reproducing kernel is unitarily invariant CNP (see, [\cite{AM}, Corollary 7.41]).
	
	There are complete Nevanlinna-Pick kernels on $\mathbb{B}_{d}$ which are not unitarily invariant. For example, Dirichlet-type spaces of functions analytic in the unit disc whose derivatives are square area integrable with superharmonic weights have complete Nevanlinna–Pick reproducing kernels, see \cite{SS}, but in general, they are not always unitarily invariant.

	\section{Existence of the characteristic function} \label{ExistCHFN}
	The characteristic function introduced by Sz.-Nagy and Foias has a long history of generating beautiful mathematics, see \cite{NV} and many references therein for the classical theory.
	
	It was created in search of a complete unitary invariant for a contraction $T$ and is defined as
	$$ \theta_{T}(z) = (-T + z D_{T^{*}} (I_{\cH} - zT^{*})^{-1} D_{T})|_{\cD_{T}}, $$
	on $\bD$. The {\em defect operators} $D_{T} = (I_{\cH} - T^{*}T)^{1/2}$ and $D_{T^{*}} = (I_{\cH} - T T ^{*})^{1/2}$ satisfy $TD_T = D_{T^*}T$. Hence, the {\em defect space} $\cD_{T} = \overline{\rm Ran}D_{T}$ is mapped by the characteristic function into the other defect space $\cD_{T^{*}} = \overline{\rm Ran}D_{T^{*}}$.  The $\cB(\cD_{T}, \cD_{T^{*}})$ valued contractive holomorphic function $\theta_T$ is a complete unitary invariant for any completely non-unitary (c.n.u.) contraction $T$, i.e., a c.n.u. contraction $T$ is unitarily equivalent to another c.n.u. contraction $R$ if and only if the following diagram commutes for some unitary operators $\sigma_1: \cD_{T} \rightarrow \cD_R$ and $\sigma_2: \cD_{T^*} \rightarrow \cD_{R^*}$ and for all $z \in \bD$:
	
	\[
	\begin{CD}
		\mathcal{D}_{T}@>\theta_{T}(z)>>\mathcal{D}_{T^{*}}\\
		@V\sigma_1 VV@VV\sigma_2 V\\
		\mathcal{D}_{R}@>>\theta_{R}(z)>\mathcal{D}_{R^{*}}
	\end{CD} .
	\]
	
	At this point, it is important to explain the interplay between a contraction $T$ and the Szeg\H{o} kernel $\mathbb S (z,w) = \frac{1}{ 1-z \overline{w}}$ on $\bD.$ Clearly, $T$ is a contraction if and only if $\frac{1}{\mathbb S}(T, T^{*}) = I - T T^{*}$ is a positive operator. It is a natural question whether a characteristic function can always be associated with a $1/k$-contraction for an admissible kernel $k$. Although, the answer is yes in case of the Drury-Arveson kernel, we shall see that the answer is no for a general admissible $k$. Hence, this section will culminate in finding a necessary and sufficient condition for a $1/k$-contraction to have a characteristic function.
	
	\subsection{Factoring a positive operator}	
	The main result of this subsection is Proposition \ref{factor}. Let $ E_{0}$ be the {\em projection onto the one dimensional subspace of constant functions in $H_{k}$}.
	
	\begin{lemma}
		For an admissible kernel $k$, we have $\Delta_{\bfM_{\bm{z}}} = E_{0}$. Moreover,  the operator $d-$tuple $\bfM_{\bm{z}} = (M_{z_{1}}, \dots, M_{z_{d}})$ is pure in the sense of Definition \ref{pure}.
	\end{lemma}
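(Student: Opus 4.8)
The plan is to do everything on the orthogonal monomial basis $\{\bm{z}^{\alpha}\}_{\alpha\in\mathbb{Z}^{d}_{+}}$ of $H_{k}$, for which $\|\bm{z}^{\alpha}\|^{2}=1/a_{\alpha}$. First I would record how the shifts act: since $\bfM_{\bm{z}}^{\alpha}$ is multiplication by $\bm{z}^{\alpha}$, one has $\bfM_{\bm{z}}^{\alpha}\bm{z}^{\beta}=\bm{z}^{\alpha+\beta}$, and a direct inner-product computation gives, with $\leq$ and $\geq$ meant componentwise,
$$(\bfM_{\bm{z}}^{\alpha})^{*}\bm{z}^{\beta}=\begin{cases}\dfrac{a_{\beta-\alpha}}{a_{\beta}}\,\bm{z}^{\beta-\alpha}, & \beta\geq\alpha\\[2pt] 0, & \text{otherwise.}\end{cases}$$
Composing these, $\bfM_{\bm{z}}^{\alpha}(\bfM_{\bm{z}}^{\alpha})^{*}\bm{z}^{\beta}=\frac{a_{\beta-\alpha}}{a_{\beta}}\bm{z}^{\beta}$ when $\beta\geq\alpha$ and $0$ otherwise, so each basis vector is an eigenvector of $\Delta_{\bfM_{\bm{z}}}^{2}=I-\sum_{\alpha\neq 0}b_{\alpha}\bfM_{\bm{z}}^{\alpha}(\bfM_{\bm{z}}^{\alpha})^{*}$, namely
$$\Delta_{\bfM_{\bm{z}}}^{2}\,\bm{z}^{\beta}=\Big(1-\frac{1}{a_{\beta}}\sum_{0<\alpha\leq\beta}b_{\alpha}\,a_{\beta-\alpha}\Big)\bm{z}^{\beta}.$$

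The crux is then the coefficient identity $a_{\beta}=\sum_{0<\alpha\leq\beta}b_{\alpha}a_{\beta-\alpha}$ for every $\beta\neq 0$, which I would extract from the defining relation \eqref{bn}. Writing $t=\la\bm{z},\bm{w}\ra$, relation \eqref{bn} says $\big(\sum_{n}a_{n}t^{n}\big)\big(1-\sum_{m\geq 1}b_{m}t^{m}\big)=1$ as convergent power series near the origin; expanding $t^{n}$ by the multinomial theorem and using the definitions in \eqref{a_alpha} turns this into
$$\Big(\sum_{\alpha}a_{\alpha}\,\bm{z}^{\alpha}\overline{\bm{w}^{\alpha}}\Big)\Big(1-\sum_{\gamma\neq 0}b_{\gamma}\,\bm{z}^{\gamma}\overline{\bm{w}^{\gamma}}\Big)=1.$$
Because products of the diagonal monomials satisfy $\bm{z}^{\alpha}\overline{\bm{w}^{\alpha}}\cdot\bm{z}^{\gamma}\overline{\bm{w}^{\gamma}}=\bm{z}^{\alpha+\gamma}\overline{\bm{w}^{\alpha+\gamma}}$ and the family $\{\bm{z}^{\beta}\overline{\bm{w}^{\beta}}\}_{\beta}$ is linearly independent, comparing the coefficient of $\bm{z}^{\beta}\overline{\bm{w}^{\beta}}$ on the two sides yields exactly $a_{\beta}=\sum_{0<\gamma\leq\beta}b_{\gamma}a_{\beta-\gamma}$ for $\beta\neq 0$ (and $a_{0}=1$). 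Feeding this back into the eigenvalue above shows it equals $0$ for every $\beta\neq 0$ and $1$ for $\beta=0$; hence $\Delta_{\bfM_{\bm{z}}}^{2}=E_{0}$, and since a projection is its own positive square root, $\Delta_{\bfM_{\bm{z}}}=E_{0}$.

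For purity I would reuse the same computation. With $\Delta_{\bfM_{\bm{z}}}^{2}=E_{0}$ the projection onto the constants, $(\bfM_{\bm{z}}^{\alpha})^{*}\bm{z}^{\beta}$ has a nonzero constant component only when $\beta=\alpha$, in which case $E_{0}(\bfM_{\bm{z}}^{\alpha})^{*}\bm{z}^{\alpha}=\frac{1}{a_{\alpha}}\cdot 1$; applying $\bfM_{\bm{z}}^{\alpha}$ and multiplying by $a_{\alpha}$ recovers $\bm{z}^{\alpha}$. In other words $a_{\alpha}\bfM_{\bm{z}}^{\alpha}\Delta_{\bfM_{\bm{z}}}^{2}(\bfM_{\bm{z}}^{\alpha})^{*}$ is precisely the rank-one orthogonal projection $P_{e_{\alpha}}$ onto the normalized monomial $e_{\alpha}=\sqrt{a_{\alpha}}\,\bm{z}^{\alpha}$. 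Summing over $\alpha\in\mathbb{Z}^{d}_{+}$ therefore gives $\sum_{\alpha}P_{e_{\alpha}}$, whose partial sums (ordered by $|\alpha|$) converge strongly to $I$ because $\{e_{\alpha}\}$ is an orthonormal basis of $H_{k}$; this is exactly the condition of Definition \ref{pure}. The only genuinely delicate point is the passage from \eqref{bn} to the multi-index recurrence, and I expect that to be the main obstacle: one must justify the coefficient comparison, which is handled by the linear independence of the diagonal monomials noted above, so that no analytic subtlety beyond convergence of the series near the origin is needed.
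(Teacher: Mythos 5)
Your argument is correct. The purity part is essentially identical to the paper's: you identify $a_{\alpha}\bfM_{\bm{z}}^{\alpha}E_{0}(\bfM_{\bm{z}}^{\alpha})^{*}$ as the rank-one projection onto $e(\alpha)=\sqrt{a_{\alpha}}\,\bm{z}^{\alpha}$, so the partial sums are an increasing sequence of projections converging strongly to $I$; the paper states exactly this. For the identity $\Delta_{\bfM_{\bm{z}}}=E_{0}$ the two routes differ slightly. The paper applies $\Delta_{\bfM_{\bm{z}}}^{2}$ to kernel functions $k_{\bm{w}}$ with $\|\bm{w}\|<\epsilon$: since $(\bfM_{\bm{z}}^{\alpha})^{*}k_{\bm{w}}=\overline{\bm{w}^{\alpha}}\,k_{\bm{w}}$, relation \eqref{bn} gives $\Delta_{\bfM_{\bm{z}}}^{2}k_{\bm{w}}=k_{\bm{w}}\cdot\bigl(1-\sum b_{\alpha}\bm{z}^{\alpha}\overline{\bm{w}^{\alpha}}\bigr)=1=E_{0}k_{\bm{w}}$ in one line, and density of these kernel functions finishes it. You instead diagonalize $\Delta_{\bfM_{\bm{z}}}^{2}$ on the monomial basis, which forces you to first extract the coefficient recurrence $a_{\beta}=\sum_{0<\gamma\leq\beta}b_{\gamma}a_{\beta-\gamma}$ from \eqref{bn} by a Cauchy-product and coefficient comparison; this is legitimate (the paper records the same recurrence as \eqref{coeff rel} in Section 3, derived the same way), and your justification via linear independence of the diagonal monomials is adequate. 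The trade-off is that the kernel-function computation uses \eqref{bn} directly and avoids the coefficient comparison entirely, while your monomial computation is more explicit and has the side benefit that the recurrence it isolates is exactly what the purity step and the later sections reuse. Both proofs are complete; only the brief appeal to SOT-convergence of $\sum b_{\alpha}\bfM_{\bm{z}}^{\alpha}(\bfM_{\bm{z}}^{\alpha})^{*}$ (part of admissibility, and in any case a finite sum on each monomial) deserves a passing mention.
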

	
	\begin{proof}
		The fact that $\Delta_{\bfM_{\bm{z}}}$ is $E_{0}$ can be seen by applying $\Delta_{\bfM_{\bm{z}}}$ on the linear combinations of kernel functions which form a dense set. The pureness of $\bfM_{\bm{z}}$ follows by observing that with respect to the orthonormal basis
		$$ e(\alpha) = \sqrt{a_{\alpha}} \bm z^\alpha \text{ for } \alpha \in \mathbb Z^d_+ ,$$
		the operator $\sum\limits_{|\alpha| \leq N} a_{\alpha} \bfM_{\bm{z}}^{\alpha} E_0 (\bfM_{\bm{z}}^{\alpha})^{*}$ is the projection onto the finite dimensional space spanned by  $ \{ e (\alpha) : \alpha\in\mathbb{Z}^{d}_{+}, |\alpha| \le N\}$.
		Thus, $\sum\limits_{|\alpha| \leq N} a_{\alpha}\bfM_{\bm{z}}^{\alpha} E_0 (\bfM_{\bm{z}}^{\alpha})^{*}$ is an increasing sequence of projections converging to $I$ in strong operator topology.
	\end{proof}
	
	Our next tool is a construction by Arazy and Englis in \cite{AE}, the roots of which go back to \cite{AEM}. For a proof of the following theorem, see [\cite{AE}, Theorem 1.3].
	
	\begin{thm} \label{V_T}
		Let $\bfT = (T_{1}, \hdots,T_{d})$ be a  pure $1/k$ contraction acting on a Hilbert space $\cH.$ Then the linear map $V_{\bfT}:\cH \to H_{k} \otimes \overline{\rm Ran} \Delta_{\bfT}$ given by
		$$h \mapsto \sum\limits_{\alpha\in\mathbb{Z}^{d}_{+}} a_{\alpha} \bm{z}^{\alpha} \otimes \Delta_{\bfT}(\bfT^{\alpha})^{*}h$$ is an isometry which satisfies $$V_{\bfT}^{*}(p(\bfM_{\bm{z}}) \otimes I_{\overline{\rm Ran} \Delta_{\bfT}}) = p(\bfT) V_{\bfT}^{*}$$ for all polynomials $p$ in $d$ complex variables.
	\end{thm}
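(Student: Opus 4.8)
The plan is to verify the two assertions separately: first that $V_{\bfT}$ is a well-defined isometry, and then that it intertwines the two tuples in the stated way. Both reduce to direct computations against the orthogonal basis $\{\bm z^{\alpha}\}_{\alpha \in \mathbb{Z}^{d}_{+}}$ of $H_{k}$, for which I would use the two facts already recorded: $\|\bm z^{\alpha}\|^{2}_{H_{k}} = 1/a_{\alpha}$, and hence in $H_{k} \otimes \overline{\rm Ran}\,\Delta_{\bfT}$ one has $\la \bm z^{\alpha} \otimes e, \bm z^{\beta} \otimes f\ra = \delta_{\alpha\beta}\, a_{\alpha}^{-1} \la e, f\ra$.

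For well-definedness and the isometry property I would compute the norm of the (a priori formal) series directly. Reading off the $\alpha$-th coefficient of $V_{\bfT} h$ as $a_{\alpha}\Delta_{\bfT}(\bfT^{\alpha})^{*}h$, orthogonality gives
$$
\|V_{\bfT} h\|^{2} = \sum_{\alpha \in \mathbb{Z}^{d}_{+}} \frac{1}{a_{\alpha}} \| a_{\alpha}\Delta_{\bfT}(\bfT^{\alpha})^{*}h \|^{2} = \sum_{\alpha \in \mathbb{Z}^{d}_{+}} a_{\alpha} \la \bfT^{\alpha}\Delta_{\bfT}^{2}(\bfT^{\alpha})^{*}h,\, h \ra = \|h\|^{2},
$$
where the last equality is exactly the pureness hypothesis of Definition \ref{pure}. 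Applying the same computation to tails shows the partial sums are Cauchy, so the series converges in $H_{k}\otimes \overline{\rm Ran}\,\Delta_{\bfT}$ and $V_{\bfT}$ is a genuine isometry; its range lands in the correct space since each coefficient $\Delta_{\bfT}(\bfT^{\alpha})^{*}h$ lies in $\overline{\rm Ran}\,\Delta_{\bfT}$.

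For the intertwining relation I would first establish the single-variable identity $(M_{z_{i}}^{*}\otimes I)V_{\bfT} = V_{\bfT}T_{i}^{*}$ and then pass to adjoints and iterate. Computing the adjoint shift on the basis gives $M_{z_{i}}^{*}\bm z^{\alpha} = (a_{\alpha - \epsilon_{i}}/a_{\alpha})\,\bm z^{\alpha - \epsilon_{i}}$ when $\alpha_{i}\geq 1$ and $0$ otherwise, where $\epsilon_{i}$ is the multi-index with a single $1$ in slot $i$. Applying this term by term to $V_{\bfT} h$, the weights collapse to leave $a_{\alpha - \epsilon_{i}}$, and after re-indexing $\gamma = \alpha - \epsilon_{i}$ together with $(\bfT^{\gamma + \epsilon_{i}})^{*} = (\bfT^{\gamma})^{*}T_{i}^{*}$ (here commutativity of the tuple is used) one recognizes the result as $V_{\bfT}(T_{i}^{*}h)$. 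Taking adjoints yields $V_{\bfT}^{*}(M_{z_{i}}\otimes I) = T_{i}V_{\bfT}^{*}$; composing these identities along a monomial $\bm z^{\alpha}$ and extending linearly gives $V_{\bfT}^{*}(p(\bfM_{\bm z})\otimes I) = p(\bfT)V_{\bfT}^{*}$ for every polynomial $p$.

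The conceptual heart is the isometry step: pureness is precisely the identity $\sum_{\alpha} a_{\alpha}\bfT^{\alpha}\Delta_{\bfT}^{2}(\bfT^{\alpha})^{*} = I$ that turns the formal sum into $\|h\|^{2}$, and for a general (non-pure) $1/k$-contraction the same computation only yields $\|V_{\bfT}h\|^{2} = \la Q h, h\ra$ with $Q = \sum_{\alpha}a_{\alpha}\bfT^{\alpha}\Delta_{\bfT}^{2}(\bfT^{\alpha})^{*}$, so isometry genuinely requires $Q = I$. The main bookkeeping obstacle is tracking the coefficients $a_{\alpha}$ against $a_{\alpha-\epsilon_{i}}$ through the adjoint-shift computation and the re-indexing, and confirming that $(\bfT^{\alpha})^{*}$ splits correctly as $(\bfT^{\gamma})^{*}T_{i}^{*}$—the one place where commutativity of $\bfT$ is indispensable.
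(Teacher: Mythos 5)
Your proof is correct. Note that the paper itself does not prove Theorem \ref{V_T}; it quotes the result from Arazy and Engli\v{s} [\cite{AE}, Theorem 1.3], so there is no internal proof to compare against. Your argument is the standard direct verification: the orthogonality of the monomials together with $\|\bm z^{\alpha}\|^{2}=1/a_{\alpha}$ turns $\|V_{\bfT}h\|^{2}$ into $\sum_{\alpha}a_{\alpha}\la \bfT^{\alpha}\Delta_{\bfT}^{2}(\bfT^{\alpha})^{*}h,h\ra$, which is $\|h\|^{2}$ precisely by the pureness hypothesis of Definition \ref{pure}, and your computation of $M_{z_{i}}^{*}\bm z^{\alpha}=(a_{\alpha-\epsilon_{i}}/a_{\alpha})\bm z^{\alpha-\epsilon_{i}}$ followed by re-indexing correctly yields $(M_{z_{i}}^{*}\otimes I)V_{\bfT}=V_{\bfT}T_{i}^{*}$ and hence the intertwining relation for all polynomials. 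This is essentially the argument of the cited source, and all the bookkeeping you flag (the collapse of the weights to $a_{\alpha-\epsilon_{i}}$ and the use of commutativity to split $(\bfT^{\gamma+\epsilon_{i}})^{*}=(\bfT^{\gamma})^{*}T_{i}^{*}$) checks out.
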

	
	Let $$\bfM_{\bm{z}} \otimes I_{\overline{\rm Ran} \Delta_{\bfT}} \bydef (M_{z_1} \otimes I_{\overline{\rm Ran} \Delta_{\bfT}}, \ldots , M_{z_d} \otimes I_{\overline{\rm Ran} \Delta_{\bfT}}).$$ We shall suppress the suffix ${\overline{\rm Ran} \Delta_{\bfT}}$ when it is obvious. An important consequence of this theorem about an admissible kernel is that every pure $1/k$-contraction is unitarily equivalent to the compression of $\bfM_{\bm{z}} \otimes I$ to a co-invariant subspace, viz., the range of the isometry $V_{\bfT}$.
	
	\begin{definition}\label{factordef}
		Let $k$ be an admissible kernel on $\mathbb{B}_{d}.$ Let $\bfT = (T_{1},\dots,T_{d})$ be a commuting $d-$ tuple of bounded operators acting on a Hilbert space $\cH.$ A positive operator $X$ in $\cB(\cH)$ with closed range is called $(k,\bfT)$-factorable if there is a Hilbert space $\cE$ and a bounded linear transformation $\Theta : H_{k} \otimes \cE \to \cH$ such that $X = \Theta \Theta^{*}$ and $\Theta (M_{z_{i}} \otimes I_{\cE}) = T_{i} \Theta$ for all $i$.
	\end{definition}
	
	See \cite{curv} for a discussion on factorable operators when $k$ is the Drury-Arveson kernel and $\bfT = (T_{1}, \dots, T_{d})$ is a $d$-contraction.
	
	\begin{proposition} \label{factor}
		Assume the setup of Definition \ref{factordef}. Let $c_{i} = \|M_{z_{i}}\|^{2}.$  Then, $X$ is $(k, \bfT)-$factorable if and only if
		\begin{enumerate}
			\item $c_{i}X - T_{i}XT^{*}_{i} \geq 0 ,$ for all $i,$
			\item the series $P_{\bfT}(X):= \sum\limits_{\alpha\in\mathbb{Z}^{d}_{+} \backslash \{0\}} b_{\alpha}  \bfT^{\alpha} X (\bfT^{\alpha})^{*} $ converges strongly such that $X - P_{\bfT}(X) \geq 0,$ and
			\item the series  $\sum\limits_{\alpha\in\mathbb{Z}^{d}_{+}} a_{\alpha} \bfT^{\alpha} (X - P_{\bfT}(X)) (\bfT^{\alpha})^{*}$ converges strongly to $X.$
		\end{enumerate}
	\end{proposition}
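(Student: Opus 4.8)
The statement generalizes Theorem \ref{V_T}: taking $X = I$ one has $X - P_{\bfT}(X) = \Delta_{\bfT}^{2}$, condition (3) becomes pureness, and the map below becomes $V_{\bfT}$. So the plan is to model the whole argument on the construction of $V_{\bfT}$, replacing the defect $\Delta_{\bfT}$ by the ``$X$-defect'' $(X - P_{\bfT}(X))^{1/2}$. For the forward implication, suppose $X = \Theta\Theta^{*}$ with $\Theta(M_{z_{i}} \otimes I_{\cE}) = T_{i}\Theta$. Iterating the intertwining gives $\Theta(\bfM_{\bm{z}}^{\alpha} \otimes I) = \bfT^{\alpha}\Theta$, whence $\bfT^{\alpha} X (\bfT^{\alpha})^{*} = \Theta(\bfM_{\bm{z}}^{\alpha}(\bfM_{\bm{z}}^{\alpha})^{*} \otimes I)\Theta^{*}$ for every $\alpha$. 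Condition (1) then follows from $c_{i}I - M_{z_{i}}M_{z_{i}}^{*} = \|M_{z_{i}}\|^{2}I - M_{z_{i}}M_{z_{i}}^{*} \geq 0$. Using the preceding lemma, $\sum_{\alpha \neq 0} b_{\alpha}\bfM_{\bm{z}}^{\alpha}(\bfM_{\bm{z}}^{\alpha})^{*} = I - \Delta_{\bfM_{\bm{z}}}^{2} = I - E_{0}$ with strong convergence (from admissibility), so conjugating by the bounded operator $\Theta$ shows $P_{\bfT}(X)$ converges strongly with $X - P_{\bfT}(X) = \Theta(E_{0} \otimes I)\Theta^{*} \geq 0$, which is (2); and pureness of $\bfM_{\bm{z}}$ gives $\sum_{\alpha} a_{\alpha}\bfM_{\bm{z}}^{\alpha} E_{0}(\bfM_{\bm{z}}^{\alpha})^{*} = I$ strongly, so conjugating again yields (3). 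The one point needing care is that conjugation by $\Theta$ passes through strong limits; this is legitimate because the partial sums involved form monotone, uniformly bounded sequences of positive operators.

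For the converse, the plan is to build $\Theta$ by hand. Set $D = (X - P_{\bfT}(X))^{1/2}$, which is a bounded positive operator by (2), let $\cE = \overline{\rm Ran}\, D$, and define $V : \cH \to H_{k} \otimes \cE$ by $Vh = \sum_{\alpha \in \mathbb{Z}_{+}^{d}} a_{\alpha}\, \bm{z}^{\alpha} \otimes D(\bfT^{\alpha})^{*}h$, noting $D(\bfT^{\alpha})^{*}h \in \cE$ automatically. Since the monomials are orthogonal with $\|\bm{z}^{\alpha}\|^{2} = 1/a_{\alpha}$, the squared norm of the partial sums is $\sum a_{\alpha}\|D(\bfT^{\alpha})^{*}h\|^{2} = \sum a_{\alpha}\langle \bfT^{\alpha}(X - P_{\bfT}(X))(\bfT^{\alpha})^{*}h, h\rangle$, which by (3) is a convergent series with sum $\langle Xh, h\rangle$. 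This simultaneously shows that $V$ is well defined and bounded and that $V^{*}V = X$. Putting $\Theta = V^{*}$ gives $\Theta\Theta^{*} = V^{*}V = X$.

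It remains to verify the intertwining, which in adjoint form reads $(M_{z_{i}}^{*} \otimes I)V = V T_{i}^{*}$. Computing on the orthogonal basis gives $M_{z_{i}}^{*}\bm{z}^{\alpha} = (a_{\alpha - e_{i}}/a_{\alpha})\,\bm{z}^{\alpha - e_{i}}$ when $\alpha_{i} \geq 1$ and $0$ otherwise, where $e_{i}$ is the $i$-th standard multi-index. Substituting into $Vh$, the weight $a_{\alpha}$ cancels the normalizing factor, and after reindexing by $\beta = \alpha - e_{i}$ one obtains $\sum_{\beta} a_{\beta}\,\bm{z}^{\beta} \otimes D(\bfT^{\beta})^{*}T_{i}^{*}h = V T_{i}^{*}h$, using $(\bfT^{\beta + e_{i}})^{*} = (\bfT^{\beta})^{*}T_{i}^{*}$; taking adjoints gives $\Theta(M_{z_{i}} \otimes I) = T_{i}\Theta$. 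I expect the main obstacle to be this converse direction, specifically recognizing $X - P_{\bfT}(X)$ as the correct defect that turns $V$ into an $X$-weighted isometry, and carrying the reindexing and strong-convergence bookkeeping through rigorously over multi-indices; condition (3) is precisely what upgrades the norm estimate to the exact equality $\|Vh\|^{2} = \langle Xh, h\rangle$ needed for $\Theta\Theta^{*} = X$.
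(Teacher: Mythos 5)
Your proof is correct. The direction ``factorable $\Rightarrow$ (1)--(3)'' coincides with the paper's argument (conjugate the corresponding identities for $\bfM_{\bm z}\otimes I$ by $\Theta$, using $\Delta_{\bfM_{\bm z}}=E_0$ and pureness of $\bfM_{\bm z}$), but your proof of the converse takes a genuinely different route. The paper passes through the closed range of $X$: condition (1) makes $A_iX^{1/2}h=X^{1/2}T_i^*h$ bounded, $S_i=A_i^*$ yields a tuple $\bfS$ on ${\rm Ran}\,X$ with $T_iX^{1/2}=X^{1/2}S_i$, conditions (2) and (3) make $\bfS$ a pure $1/k$-contraction, and Theorem \ref{V_T} applied to $\bfS$ produces $\Theta=X^{1/2}V_{\bfS}^{*}$. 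You instead write down the $X$-weighted analogue of $V_{\bfT}$ directly, $Vh=\sum_{\alpha}a_{\alpha}\,\bm z^{\alpha}\otimes D(\bfT^{\alpha})^{*}h$ with $D=(X-P_{\bfT}(X))^{1/2}$, and verify $V^{*}V=X$ and the intertwining by hand on monomials. Your route is more elementary and slightly more general: it uses neither condition (1) nor the closed-range hypothesis on $X$, so it exposes that (2) and (3) alone already imply factorability, with (1) then a consequence of the forward direction. What the paper's route buys is the reuse of Theorem \ref{V_T} as a black box and the structurally informative fact that every such factorization arises from a pure $1/k$-contraction living on ${\rm Ran}\,X$. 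One small correction to your forward direction: the partial sums $\sum_{1\le|\alpha|\le N}b_{\alpha}(\bfM_{\bm z}^{\alpha}\otimes I)((\bfM_{\bm z}^{\alpha})^{*}\otimes I)$ need not be monotone for a general admissible kernel, since the $b_{\alpha}$ may be negative (e.g.\ for the Bergman kernel); the step you want is valid anyway, because conjugation by the fixed bounded operator $\Theta$ preserves strong convergence with no monotonicity or uniform boundedness required.
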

	\begin{proof}
		Since $X$ is a positive operator with closed range, we have $${\rm Ran} X = {\rm Ran} X^{1/2}$$ where $X^{1/2}$ is the unique positive square root of the operator $X$. If the conditions (1), (2) and (3) are satisfied, define for each $i,$ a linear transformation $A_{i}: {\rm Ran} X \to {\rm Ran} X$ by $$A_{i} X^{1/2}h = X^{1/2}T_{i}^{*} h  \hspace{5mm} \text{for all } h \in \cH.$$ Each $A_{i}$ is a bounded operator as $$\|A_{i} X^{1/2} h \|^{2} = \langle T_{i}XT_{i}^{*} h , h \rangle \leq c_{i} \|X^{1/2} h \|^{2}.$$ Let $S_{i} = A_{i}^{*}.$ The operator $S_{i}: {\rm Ran} X \to {\rm Ran} X$ is a bounded operator which satisfies $$ T_{i} X ^{1/2} = X^{1/2} S_{i}.$$ In fact, the $d-$tuple $\bfS = (S_{1},\dots,S_{d})$ on ${\rm Ran} X$ is a $1/k$-contraction. To see that, we observe that for any $h\in\cH$ we have
		\begin{align*}
			\langle X^{1/2}h, X^{1/2}h \rangle
			& \geq  \left\langle \sum\limits_{\alpha\in\mathbb{Z}^{d}_{+} \backslash \{0\}} b_{\alpha}  \bfT^{\alpha} X (\bfT^{\alpha})^{*}   h ,h \right\rangle \hspace{5mm} \text{(by condition (2))}\\
			& = \sum\limits_{\alpha\in\mathbb{Z}^{d}_{+} \backslash \{0\}} b_{\alpha}  \langle \bfT^{\alpha} X (\bfT^{\alpha})^{*}  h, h \rangle \\
			& = \sum\limits_{\alpha\in\mathbb{Z}^{d}_{+} \backslash \{0\}} b_{\alpha} \langle  X^{1/2} \bfS^{\alpha}  (\bfS^{\alpha})^{*} X^{1/2} h, h \rangle\\
			& = \left\langle \sum\limits_{\alpha\in\mathbb{Z}^{d}_{+} \backslash \{0\}} b_{\alpha} \bfS^{\alpha} (\bfS^{*})^{\alpha} X^{1/2} h, X^{1/2} h  \right\rangle .
		\end{align*}
		More is true, viz., $\bfS$ is pure because
		\begin{align*}
			\left\langle \sum\limits_{\alpha\in\mathbb{Z}^{d}_{+}} a_{\alpha} \bfS^{\alpha} \Delta_{\bfS}^{2} (\bfS^{\alpha})^{*} X^{1/2}h , X^{1/2} h \right\rangle
			& = \sum\limits_{\alpha\in\mathbb{Z}^{d}_{+}} a_{\alpha} \langle \bfS^{\alpha} \Delta_{\bfS}^{2} (\bfS^{\alpha})^{*}X^{1/2}h, X^{1/2}h \rangle \\
			& = \left\langle \sum\limits_{\alpha\in\mathbb{Z}^{d}_{+}} a_{\alpha} \bfT^{\alpha} (X - P_{\bfT}(X)) (\bfT^{\alpha})^{*} h,h \right\rangle \\
			& =  \langle Xh, h \rangle  \hspace{5mm} \text{(by condition (3))}.
		\end{align*}
		Let $\cE = \overline{\rm Ran} \Delta_{\bfS}.$ By Theorem \ref{V_T} we have an isometry $$V_{\bfS} : {\rm Ran} X \to H_{k} \otimes \cE$$ such that $$V_{\bfS}^{*}(M_{z_{i}} \otimes I_{\cE}) = S_{i} V_{\bfS}^{*}.$$ Defining $\Theta: H_{k} \otimes \cE \to \cH$ by $\Theta = X^{1/2} V_{\bfS}^{*}$, we have $$ \Theta \Theta^{*} = X^{1/2} V_{\bfS}^{*} V_{\bfS} X^{1/2} = X$$ and $$\Theta (M_{z_{i}} \otimes I_{\cE}) = X^{1/2} V_{\bfS}^{*}(M_{z_{i}} \otimes I_{\cE}) = X^{1/2} S_{i} V_{\bfS}^{*} = T_{i} X^{1/2} V_{\bfS}^{*} = T_{i} \Theta .$$
		
		Conversely, if there is a Hilbert space $\cE$ and a bounded linear transformation $$\Theta : H_{k} \otimes \cE \to \cH$$ such that $X = \Theta \Theta^{*}$ and $\Theta (M_{z_{i}} \otimes I_{\cE}) = T_{i} \Theta$, then to prove (1), we note that
		\begin{align*}
			c_{i}X - T_{i} X T_{i}^{*}
			& = c_{i}\Theta \Theta^{*} - T_{i} \Theta \Theta^{*} T_{i}^{*} \\
			& = c_{i}\Theta \Theta^{*} - \Theta (M_{z_{i}} \otimes I_{\cE}) (M_{z_{i}}^{*} \otimes I_{\cE}) \Theta^{*} \\
			& = \Theta (c_{i} I - (M_{z_{i}} \otimes I_{\cE}) (M_{z_{i}}^{*} \otimes I_{\cE})) \Theta^{*}  \geq 0.
		\end{align*}
		To prove (2) and (3), we fix an $N\in\mathbb{N}$. Then, we note that
		\begin{align*}
			X - \sum\limits_{1 \leq |\alpha| \leq N} b_{\alpha} \bfT^{\alpha} X (\bfT^{\alpha})^{*}
			&  = \Theta \Theta^{*} - \sum\limits_{1 \leq |\alpha| \leq N} b_{\alpha}  \bfT^{\alpha} \Theta \Theta^{*} (\bfT^{\alpha})^{*}  \\
			& =  \Theta \left(I - \sum\limits_{1 \leq |\alpha| \leq N} b_{\alpha} (\bfM_{\bm{z}}^{\alpha} \otimes I_{\cE} ) ((\bfM_{\bm{z}}^{\alpha})^{*} \otimes I_{\cE})\right) \Theta^{*}
		\end{align*}
		and	
		\begin{align*}
			\sum\limits_{|\alpha| \leq N} a_{\alpha} \bfT^{\alpha} (X - P_{\bfT}(X)) (\bfT^{\alpha})^{*}
			& = \sum\limits_{|\alpha| \leq N} a_{\alpha} \bfT^{\alpha} (\Theta \Theta^{*} - P_{\bfT}(\Theta \Theta^{*})) (\bfT^{\alpha})^{*}\\
			& = \Theta \left( \sum\limits_{|\alpha| \leq N} a_{\alpha} (\bfM_{\bm{z}}^{\alpha} \otimes I_{\cE}) \Delta_{(\bfM_{\bm{z}} \otimes I_{\cE})} ((\bfM_{\bm{z}}^{\alpha})^{*} \otimes I_{\cE})\right) \Theta^{*}.
		\end{align*}
		Now, letting $N$ go to $\infty$ completes the proof.
	\end{proof}
	
	\subsection{An invariant subspace, and an associated tuple}
	The special case of an operator $X$ which is $(k, \bfT)-$factorable when the Hilbert space $\cH$ is of the form $H_{k} \otimes \cE$ for the same kernel $k$ and the operators $T_i$ are $M_{z_i} \otimes I_{\cE}$ is of particular interest because it is straightforward then that the {\em factor} $\Theta$ is a multipication operator.
	\begin{definition}
		A pure $1/k$-contraction $\bfT = (T_{1}, \dots, T_{d})$ is said to admit a characteristic function if there exist a Hilbert space $\cE$ and a $\cB(\cE, \overline{\rm Ran} \Delta_{\bfT})-$valued analytic function $\theta_{\bfT}$ on $\mathbb{B}_{d}$ such that $M_{\theta_{\bfT}}$ is a multipication operator from $H_{k} \otimes \cE$ to $H_{k} \otimes \overline{\rm Ran} \Delta_{\bfT}$ satisfying $$I - V_{\bfT}V_{\bfT}^{*} = M_{\theta_{\bfT}} M_{\theta_{\bfT}}^{*}.$$
	\end{definition}
	
	The following proposition follows from the fact that if an operator $\Theta: H_{k} \otimes \cE \to H_{k} \otimes \cF$ satsifies $\Theta (M_{z_{i}} \otimes I_{\cE}) = (M_{z_{i}} \otimes I_{\cF}) \Theta$ for all $i = 1, \dots ,d,$ then there exists a $\cB(\cE, \cF)-$valued analytic function $\theta$ on $\mathbb{B}_{d}$ such that $\Theta = M_{\theta}.$
	\begin{proposition} \label{chfnfac}
		A pure $1/k$-contraction $\bfT = (T_{1}, \dots, T_{d})$  admits a characteristic function if and only if the projection $I - V_{\bfT} V_{\bfT}^{*}$ is $(k, \bfM_{\bm{z}} \otimes I )-$ factorable.
	\end{proposition}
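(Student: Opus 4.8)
The plan is to prove both implications by directly matching the two definitions, once I record two elementary observations: that $I - V_{\bfT}V_{\bfT}^{*}$ is an orthogonal projection (so Definition \ref{factordef} applies to it), and that the intertwining requirement in Definition \ref{factordef}, when specialized to the target tuple $\bfM_{\bm z} \otimes I$, is precisely the hypothesis of the multiplier identification recalled just before the statement.

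First I would note that, since $V_{\bfT}$ is an isometry by Theorem \ref{V_T}, the operator $V_{\bfT}V_{\bfT}^{*}$ is the orthogonal projection onto $\operatorname{Ran}V_{\bfT}$. Hence $X := I - V_{\bfT}V_{\bfT}^{*}$ is again an orthogonal projection; in particular it is a positive operator with closed range acting on $\cH = H_{k}\otimes\overline{\rm Ran}\,\Delta_{\bfT}$. This is exactly what is needed to speak of $X$ being $(k,\bfM_{\bm z}\otimes I)$-factorable in the sense of Definition \ref{factordef}, with $\cH = H_{k}\otimes\overline{\rm Ran}\,\Delta_{\bfT}$ and $T_{i} = M_{z_{i}}\otimes I_{\overline{\rm Ran}\,\Delta_{\bfT}}$.

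For the forward implication, suppose $\bfT$ admits a characteristic function, so that there exist $\cE$ and a $\cB(\cE,\overline{\rm Ran}\,\Delta_{\bfT})$-valued analytic $\theta_{\bfT}$ with $M_{\theta_{\bfT}}\colon H_{k}\otimes\cE \to H_{k}\otimes\overline{\rm Ran}\,\Delta_{\bfT}$ and $I - V_{\bfT}V_{\bfT}^{*} = M_{\theta_{\bfT}}M_{\theta_{\bfT}}^{*}$. I would take $\Theta = M_{\theta_{\bfT}}$ and use that every multiplication operator intertwines the coordinate shifts, $M_{\theta_{\bfT}}(M_{z_{i}}\otimes I_{\cE}) = (M_{z_{i}}\otimes I_{\overline{\rm Ran}\,\Delta_{\bfT}})M_{\theta_{\bfT}}$. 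This is precisely $\Theta(M_{z_{i}}\otimes I_{\cE}) = T_{i}\Theta$ for $T_{i} = M_{z_{i}}\otimes I$, and together with $X = \Theta\Theta^{*}$ it exhibits $X$ as $(k,\bfM_{\bm z}\otimes I)$-factorable. For the converse, suppose $X = I - V_{\bfT}V_{\bfT}^{*}$ is $(k,\bfM_{\bm z}\otimes I)$-factorable, witnessed by $\cE$ and $\Theta\colon H_{k}\otimes\cE \to H_{k}\otimes\overline{\rm Ran}\,\Delta_{\bfT}$ with $X = \Theta\Theta^{*}$ and $\Theta(M_{z_{i}}\otimes I_{\cE}) = (M_{z_{i}}\otimes I)\Theta$. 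Here I would invoke the fact stated immediately before the proposition: an operator between two vector-valued unitarily invariant spaces that intertwines the coordinate shifts is automatically a multiplication operator, $\Theta = M_{\theta_{\bfT}}$ for some $\cB(\cE,\overline{\rm Ran}\,\Delta_{\bfT})$-valued analytic $\theta_{\bfT}$. Then $I - V_{\bfT}V_{\bfT}^{*} = \Theta\Theta^{*} = M_{\theta_{\bfT}}M_{\theta_{\bfT}}^{*}$, which is exactly the defining equation for admitting a characteristic function.

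The only genuine point of substance—the nearest thing to an obstacle—is ensuring that the special product structure of the target tuple $\bfM_{\bm z}\otimes I$ is what powers the argument: general factorability (Definition \ref{factordef}) yields only an intertwining operator $\Theta$, and it is the passage from intertwiner to multiplier, legitimate precisely because both domain $H_{k}\otimes\cE$ and codomain $H_{k}\otimes\overline{\rm Ran}\,\Delta_{\bfT}$ are of the form $H_{k}\otimes(\cdot)$ for the \emph{same} kernel $k$, that converts factorability into the existence of $\theta_{\bfT}$. Everything else is a direct translation between the two definitions.
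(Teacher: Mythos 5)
Your proof is correct and is essentially the same as the paper's, which simply observes that the proposition follows from the fact that an operator $\Theta: H_{k}\otimes\cE \to H_{k}\otimes\cF$ intertwining the coordinate shifts must be a multiplication operator $M_{\theta}$. You have merely written out both directions of this definitional translation in full detail, including the (correct) observation that $I - V_{\bfT}V_{\bfT}^{*}$ is a projection and hence a positive operator with closed range as required by Definition \ref{factordef}.
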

	
	By virtue of Theorem \ref{V_T}, we observe that the kernel of $V_{\bfT}^{*}$, i.e., the range of projection $I - V_{\bfT} V_{\bfT}^{*}$ is an invariant subspace of the tuple $\bfM_{\bm{z}} \otimes I$.
	
	\begin{definition} \label{assoctuple}
		Let $\bfT = (T_{1}, \dots,T_{d})$ be a pure $1/k-$contraction. The associated $d-$tuple of commuting operators $\bfB_{\bfT}$ is defined on ${\rm Ker} V_{\bfT}^{*}$ as $$\bfB_{\bfT} = ((M_{z_1} \otimes I)|_{{\rm Ker} V_{\bfT}^{*}},  \ldots , (M_{z_d} \otimes I)|_{{\rm Ker} V_{\bfT}^{*}}).$$
	\end{definition}

	We have reached the main theorem of this section which gives a neat equivalent condition for existence of a characteristic function. This is a crucial result because the proof of the characterization of unitarily invariant CNP kernels obtained in the next section requires 	this result.
	
	\begin{thm} \label{characterization}
		A pure $1/k$-contraction $\bfT = (T_{1}, \dots,T_{d})$ acting on a Hilbert space $\cH$  admits a characteristic function if and only if the associated tuple $\bfB_{\bfT}$ is a $1/k$-contraction.
	\end{thm}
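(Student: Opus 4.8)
The plan is to run the statement through the two structural results already in hand. Proposition \ref{chfnfac} reduces the existence of a characteristic function to the $(k,\bfM_{\bm{z}}\otimes I)$-factorability of the projection $X:=I-V_{\bfT}V_{\bfT}^{*}$, and Proposition \ref{factor}, applied with $\cH=H_{k}\otimes\overline{\rm Ran}\,\Delta_{\bfT}$ and the tuple $\bfM_{\bm{z}}\otimes I$, converts that factorability into the three conditions (1)--(3) there. So the entire proof amounts to matching those three conditions against the single statement ``$\bfB_{\bfT}$ is a $1/k$-contraction''. Throughout I write $Q_{i}=M_{z_{i}}\otimes I$, $c_{i}=\|M_{z_{i}}\|^{2}$, and $\cM={\rm Ran}\,X={\rm Ker}\,V_{\bfT}^{*}$. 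By the remark following Proposition \ref{chfnfac}, $\cM$ is invariant under $\bfM_{\bm{z}}\otimes I$ (equivalently $\cM^{\perp}={\rm Ran}\,V_{\bfT}$ is co-invariant), and $\bfB_{\bfT}$ is exactly the restriction $(\bfM_{\bm{z}}\otimes I)|_{\cM}$, so that $\bfB_{\bfT}^{\alpha}=(\bfM_{\bm{z}}\otimes I)^{\alpha}|_{\cM}$ and $(\bfB_{\bfT}^{\alpha})^{*}=X\,((\bfM_{\bm{z}}\otimes I)^{\alpha})^{*}|_{\cM}$.

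First I would dispose of condition (1), which I expect to be automatic and to use no hypothesis on $\bfB_{\bfT}$. Since $\cM^{\perp}$ is co-invariant, $Q_{i}^{*}$ maps $\cM^{\perp}$ into itself, whence $XQ_{i}^{*}=XQ_{i}^{*}X$. As $X$ is a projection, $\|XQ_{i}^{*}g\|=\|XQ_{i}^{*}Xg\|\le\|Q_{i}^{*}Xg\|\le\sqrt{c_{i}}\,\|Xg\|$ for every $g$, and this is precisely $c_{i}X-Q_{i}XQ_{i}^{*}\ge0$.

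The core of the argument is condition (2). Set $Y:=X-P_{\bfM_{\bm{z}}\otimes I}(X)$. I would first show $Y$ is supported on $\cM$, i.e. $Y=XYX$: indeed $X$ kills $\cM^{\perp}$, and for $y\in\cM^{\perp}$ each summand $(\bfM_{\bm{z}}\otimes I)^{\alpha}X((\bfM_{\bm{z}}\otimes I)^{\alpha})^{*}y$ vanishes because $((\bfM_{\bm{z}}\otimes I)^{\alpha})^{*}y\in\cM^{\perp}$ by co-invariance, so $Y|_{\cM^{\perp}}=0$ and, $Y$ being self-adjoint, $Y=XYX$. Compressing to $\cM$ and using the intertwining formulas above gives the clean identity $Y|_{\cM}=I_{\cM}-\sum_{\alpha\neq0}b_{\alpha}\bfB_{\bfT}^{\alpha}(\bfB_{\bfT}^{\alpha})^{*}=\Delta_{\bfB_{\bfT}}^{2}$, while convergence of $P_{\bfM_{\bm{z}}\otimes I}(X)$ is equivalent to convergence of the defining series of $\bfB_{\bfT}$. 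Because $Y=XYX$, positivity of $Y$ on the whole space is equivalent to positivity of $Y|_{\cM}$; hence condition (2) holds if and only if $\bfB_{\bfT}$ is a $1/k$-contraction. This already settles the forward implication: a characteristic function makes $X$ factorable, forcing (2), hence $\bfB_{\bfT}$ is a $1/k$-contraction.

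It remains, for the converse, to recover condition (3), and this is the step I expect to be the main obstacle. Writing $\iota:\cM\hookrightarrow H_{k}\otimes\overline{\rm Ran}\,\Delta_{\bfT}$ and using $Y=\iota\,\Delta_{\bfB_{\bfT}}^{2}\,\iota^{*}$ together with $(\bfM_{\bm{z}}\otimes I)^{\alpha}\iota=\iota\,\bfB_{\bfT}^{\alpha}$, one finds $\sum_{|\alpha|\le N}a_{\alpha}(\bfM_{\bm{z}}\otimes I)^{\alpha}Y((\bfM_{\bm{z}}\otimes I)^{\alpha})^{*}=\iota\big(\sum_{|\alpha|\le N}a_{\alpha}\bfB_{\bfT}^{\alpha}\Delta_{\bfB_{\bfT}}^{2}(\bfB_{\bfT}^{\alpha})^{*}\big)\iota^{*}$, so condition (3) is equivalent to the strong convergence $\sum_{\alpha}a_{\alpha}\bfB_{\bfT}^{\alpha}\Delta_{\bfB_{\bfT}}^{2}(\bfB_{\bfT}^{\alpha})^{*}=I_{\cM}$, that is, to $\bfB_{\bfT}$ being \emph{pure}. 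Thus the theorem reduces to showing that a $1/k$-contraction arising as the restriction of the pure tuple $\bfM_{\bm{z}}\otimes I$ to an invariant subspace is automatically pure. To prove this I would combine two ingredients. The reciprocity between $\{a_{\alpha}\}$ and $\{b_{\alpha}\}$ encoded in \eqref{bn}, namely $\sum_{\beta+\gamma=\delta}a_{\beta}c_{\gamma}=\delta_{\delta,0}$ where $\sum_{n}c_{n}t^{n}=1/\sum_{n}a_{n}t^{n}$, pins the candidate strong limit to be exactly $I_{\cM}$; and a monotonicity/tail estimate controls the defect of the increasing partial sums from $I_{\cM}$ by the corresponding tail for the model $\bfM_{\bm{z}}\otimes I$, which vanishes since that model is pure (by the Lemma, $\sum_{|\alpha|\le N}a_{\alpha}(\bfM_{\bm{z}}\otimes I)^{\alpha}E_{0}((\bfM_{\bm{z}}\otimes I)^{\alpha})^{*}$ is the projection onto degrees $\le N$, increasing to $I$). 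The delicate point is that the weights $c_{\gamma}$ need not be of one sign, so the domination must be carried out on the model \emph{before} compressing to $\cM$; handling those signs is the real work. Once purity is secured, conditions (1)--(3) all hold, $X$ is $(k,\bfM_{\bm{z}}\otimes I)$-factorable, and $\bfB_{\bfT}$ admits a characteristic function; as a consistency check, the factorization $X=\Theta\Theta^{*}$ makes $\Theta^{*}|_{\cM}$ an isometry intertwining $\bfB_{\bfT}^{*}$ with $(\bfM_{\bm{z}}\otimes I_{\cF})^{*}$, exhibiting $\bfB_{\bfT}$ as a compression of the pure model to a co-invariant subspace, hence pure as expected.
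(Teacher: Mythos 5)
Your reduction to Proposition \ref{factor} via Proposition \ref{chfnfac} is the paper's strategy, and your treatment of conditions (1) and (2) is correct: condition (1) follows from the co-invariance of ${\rm Ran}\,V_{\bfT}$ under $\bfM_{\bm z}\otimes I$, and the identity $X-P_{\bfM_{\bm z}\otimes I}(X)=\iota\,\Delta_{\bfB_{\bfT}}^{2}\,\iota^{*}$ shows condition (2) is precisely the statement that $\bfB_{\bfT}$ is a $1/k$-contraction, which also disposes of the forward implication (the paper proves that direction by conjugating the partial sums with $M_{\theta_{\bfT}}$, but that is the same computation as the easy half of Proposition \ref{factor}). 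The gap is in the converse, at exactly the point you flag: you reduce condition (3) to the purity of $\bfB_{\bfT}$ and then only gesture at a ``monotonicity/tail estimate,'' conceding that ``handling those signs is the real work.'' That work is the entire content of the hard direction, and the route you sketch --- dominating the defect by the tail of the pure model using the reciprocal coefficients of $\sum a_n t^n$ --- is not obviously salvageable precisely because those coefficients need not have one sign; nothing in your outline converts the sign problem into an estimate.

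The paper closes this step without ever invoking the reciprocal power series. Writing $P$ for $P_{\bfM_{\bm z}\otimes I}$ and $S_{N}=\sum_{|\alpha|\le N}a_{\alpha}(\bfM_{\bm z}^{\alpha}\otimes I)\bigl(X-P(X)\bigr)((\bfM_{\bm z}^{\alpha})^{*}\otimes I)$ with $X=I-V_{\bfT}V_{\bfT}^{*}$, it proves the operator inequality $S_{N}\le X$ by testing on the dense span of vectors $k_{\bm w}\otimes\xi$ with $\|\bm w\|<\epsilon$. Since $((\bfM_{\bm z}^{\alpha})^{*}\otimes I)(k_{\bm w}\otimes\xi)=\overline{\bm w^{\alpha}}\,k_{\bm w}\otimes\xi$, the quadratic form of $S_{N}$ at $\sum_{i}k_{\bm w_{i}}\otimes\xi_{i}$ equals $\sum_{|\alpha|\le N}a_{\alpha}\sum_{i,j}\overline{\bm w_{i}^{\alpha}}\bm w_{j}^{\alpha}\bigl(1-\sum_{\beta}b_{\beta}\overline{\bm w_{i}^{\beta}}\bm w_{j}^{\beta}\bigr)\la X(k_{\bm w_{i}}\otimes\xi_{i}),k_{\bm w_{j}}\otimes\xi_{j}\ra$. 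The hypothesis (condition (2)) makes the matrix $\bigl[(1-\sum_{\beta}b_{\beta}\overline{\bm w_{i}^{\beta}}\bm w_{j}^{\beta})\la X(k_{\bm w_{i}}\otimes\xi_{i}),k_{\bm w_{j}}\otimes\xi_{j}\ra\bigr]_{i,j}$ positive semidefinite, so each $\alpha$-summand is nonnegative (Schur product with the rank-one positive matrix $[\overline{\bm w_{i}^{\alpha}}\bm w_{j}^{\alpha}]$); hence the partial sums increase and are dominated by the full sum, which telescopes via \eqref{bn} to exactly $\la X\sum_{i}(k_{\bm w_{i}}\otimes\xi_{i}),\sum_{j}(k_{\bm w_{j}}\otimes\xi_{j})\ra$. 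This single computation yields boundedness, strong convergence, and identification of the limit, i.e.\ condition (3), and incidentally the purity of $\bfB_{\bfT}$ recorded in the paper's subsequent Remark. Without an argument of this kind your proof of the converse is incomplete.
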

	
	\begin{proof}
		It is trivial to note that $\bfB_{\bfT}$ is a $1/k$-contraction if and only if the series $$\sum\limits_{\alpha\in\mathbb{Z}^{d}_{+} \backslash \{0\}} b_{\alpha} (\bfM_{\bm{z}}^{\alpha} \otimes I) (I - V_{\bfT} V_{\bfT}^{*})((\bfM_{\bm{z}}^{\alpha})^{*} \otimes I)  $$ converges strongly such that $$ (I - V_{\bfT} V_{\bfT}^{*}) -  \sum\limits_{\alpha\in\mathbb{Z}^{d}_{+} \backslash \{0\}} b_{\alpha} (\bfM_{\bm{z}}^{\alpha} \otimes I) (I - V_{\bfT} V_{\bfT}^{*})((\bfM_{\bm{z}}^{\alpha})^{*} \otimes I) \geq 0.$$ We prove the easy side first. Let $\bfT$ admit a characteristic function. This means that there is a Hilbert space $\cE$ and a $\cB(\cE, \overline{\rm Ran} \Delta_{\bfT})-$valued bounded analytic function $\theta_{\bfT}$ on $\mathbb B_d$ such that $$I - V_{\bfT} V_{\bfT}^{*} = M_{\theta_{\bfT}} M_{\theta_{\bfT}}^*.$$ Thus, for any $N \geq 1,$ we get
		\begin{align*}
			& (I - V_{\bfT} V_{\bfT}^{*}) - \sum\limits_{1 \leq |\alpha| \leq N} b_{\alpha} (\bfM_{\bm{z}}^{\alpha} \otimes I) (I - V_{\bfT} V_{\bfT}^{*})((\bfM_{\bm{z}}^{\alpha})^{*} \otimes I)\\
			= & M_{\theta_{\bfT}} M_{\theta_{\bfT}}^{*} -  \sum\limits_{1 \leq |\alpha| \leq N} b_{\alpha}(\bfM_{\bm{z}}^{\alpha} \otimes I) M_{\theta_{\bfT}} M_{\theta_{\bfT}}^{*} ((\bfM_{\bm{z}}^{\alpha})^{*} \otimes I) \\
			= & M_{\theta_{\bfT}} \left(I -  \sum\limits_{1 \leq |\alpha| \leq N} b_{\alpha}(\bfM_{\bm{z}}^{\alpha} \otimes I)   ((\bfM_{\bm{z}}^{\alpha})^{*} \otimes I) \right)M_{\theta_{\bfT}}^{*}
		\end{align*}
		and that completes the proof of this direction.
		
		Conversely, let $c_{i} = \|M_{z_{i}}\|^{2}.$ Since $V_{\bfT}$ is an isometry, $I - V_{\bfT}V_{\bfT}^{*}$ is the projection onto ${\rm Ker} V_{\bfT}^{*}$ which is invariant under $M_{z_{i}} \otimes I$ for each $i.$ Let $P_{{\rm Ker} V_{\bfT}^{*}}$ be the projection of $H_{k} \otimes \overline{\rm Ran} \Delta_{\bfT}$ onto ${\rm Ker} V_{\bfT}^{*}.$ Now for each $i,$ define a linear operator $R_{i} : {\rm Ker} V_{\bfT}^{*}\to {\rm Ker} V_{\bfT}^{*} $ by $R_{i} = (M_{z_{i}} \otimes I)|_{\rm Ker V_{\bfT}^{*}}.$ Note that $\|R_{i}\|^{2} \leq c_{i}.$ So we have
		\begin{align*}
			& c_{i} (I - V_{\bfT} V_{\bfT}^{*}) - (M_{z_{i}} \otimes I) ( I - V_{\bfT} V_{\bfT}^{*}) (M_{z_{i}}^{*} \otimes I)\\
			= & c_{i} P_{{\rm Ker} V_{\bfT}^{*}} - (M_{z_{i}} \otimes I) P_{{\rm Ker} V_{\bfT}^{*}} (M_{z_{i}}^{*} \otimes I)\\
			= & c_{i} P_{{\rm Ker} V_{\bfT}^{*}} - (M_{z_{i}} \otimes I) P_{{\rm Ker} V_{\bfT}^{*}} (M_{z_{i}}^{*} \otimes I) P_{{\rm Ker} V_{\bfT}^{*}} \\
			\geq & 0 \hspace{5mm} \text{(since $c_{i} I_{{\rm Ker} V_{\bfT}^{*}} - R_{i} R_{i}^{*} \geq 0$)}.
		\end{align*}
		
		Since the operator $I - V_{\bfT} V_{\bfT}^{*}$ acts on the Hilbert space $H_{k} \otimes \overline{\rm Ran} \Delta_{\bfT}$, we can consider the map $P_{\bfM_{\bm{z}} \otimes I}$ as in Proposition \ref{factor} with $\bfT$ replaced by $\bfM_{\bm{z}} \otimes I.$ For the sake of notational brevity, we shall denote it by $P$ for the rest of this proof. Then the given condition is $$(I - V_{\bfT} V_{\bfT}^{*}) - P(I - V_{\bfT} V_{\bfT}^{*}) \geq 0.$$ Next we shall show that the series $$ \sum\limits_{\alpha\in\mathbb{Z}^{d}_{+}} a_{\alpha} (\bfM_{\bm{z}}^{\alpha} \otimes I) ((I - V_{\bfT} V_{\bfT}^{*}) - P(I - V_{\bfT} V_{\bfT}^{*})) ((\bfM_{\bm{z}}^{\alpha})^{*} \otimes I)$$ converges strongly to $I- V_{\bfT}V_{\bfT}^{*}.$ Note that if we define $$S_{N} = \sum\limits_{|\alpha| \leq N} a_{\alpha} (\bfM_{\bm{z}}^{\alpha} \otimes I) ((I - V_{\bfT} V_{\bfT}^{*}) - P(I - V_{\bfT} V_{\bfT}^{*})) ((\bfM_{\bm{z}}^{\alpha})^{*} \otimes I),$$ then $S_{N}$ is an increasing sequence of positive operators. So if we can show that $\{S_{N}\}$ is bounded above then it will converge strongly.
		
		\vspace*{3mm}
		
		\textbf{Claim}: $S_{N} \leq I- V_{\bfT}V_{\bfT}^{*}$ for every $N \in \mathbb{N}.$
		
		\textbf{Proof of the claim}: Fix an $\epsilon > 0$ so that \eqref{bn} is satisfied and consider the dense subspace $$\left\{  \sum\limits_{i=1}^{l} (k_{\bm{w}_{i}} \otimes \xi_{i}) : l \ge 1, \bm w_1, \ldots \bm w_l \in \mathbb B_d, \|\bm w_1\| < \epsilon, \ldots , \|\bm w_l\| < \epsilon, \xi_1, \ldots \xi_l \in \overline{\rm Ran} \Delta_{\bfT} \right\} $$ of $H_{k} \otimes \overline{\rm Ran} \Delta_{\bfT}.$ It is enough to verify that
		\begin{equation} \label{SNleI-VV*}
			\left\langle S_{N} \sum\limits_{i=1}^{l} (k_{\bm{w}_{i}} \otimes \xi_{i}),\sum\limits_{i=1}^{l} (k_{\bm{w}_{i}} \otimes \xi_{i}) \right\rangle \le  \left\langle (I - V_{\bfT} V_{\bfT}^{*}) \sum\limits_{i=1}^{l} (k_{\bm{w}_{i}} \otimes \xi_{i}), \sum\limits_{i=1}^{l} (k_{\bm{w}_{i}} \otimes \xi_{i}) \right\rangle .
		\end{equation}
		The left hand side after a simplification is $$ \sum\limits_{|\alpha| \leq N} a_{\alpha} \sum\limits_{i,j=1}^{l} \overline{\bm{w}_{i}}^{\alpha} \bm{w}_{j}^{\alpha} \left( 1 - \sum\limits_{\beta\in\mathbb{Z}^{d}_{+} \backslash \{0\}} b_{\beta} \overline{\bm{w}_{i}}^{\beta} \bm{w}_{j}^{\beta} \right) \langle (I - V_{\bfT} V_{\bfT}^{*}) (k_{\bm{w}_{i}} \otimes \xi_{i}) , (k_{\bm{w_{j}}} \otimes \xi_{j}) \rangle .$$ This is no bigger than $$ \sum\limits_{\alpha\in\mathbb{Z}^{d}_{+}} a_{\alpha} \sum\limits_{i,j=1}^{l} \overline{\bm{w}_{i}}^{\alpha} \bm{w}_{j}^{\alpha} \left( 1 - \sum\limits_{\beta\in\mathbb{Z}^{d}_{+} \backslash \{0\}} b_{\beta} \overline{\bm{w}_{i}}^{\beta} \bm{w}_{j}^{\beta} \right) \langle (I - V_{\bfT} V_{\bfT}^{*}) (k_{\bm{w}_{i}} \otimes \xi_{i}) , (k_{\bm{w_{j}}} \otimes \xi_{j}) \rangle$$ which is the same as $$\sum\limits_{i,j=1}^{l} \langle (I - V_{\bfT} V_{\bfT}^{*}) (k_{\bm{w}_{i}} \otimes \xi_{i}) , (k_{\bm{w_{j}}} \otimes \xi_{j}) \rangle $$ and that is the right hand side of \eqref{SNleI-VV*}. This proves the claim.
		
		So $S_{N}$ converges strongly. From the calculations above it is also clear that $S_{N}$ increases to $I - V_{\bfT} V_{\bfT}^{*}.$
		
		By Proposition \ref{factor}, the operator $I - V_{\bfT} V_{\bfT}^{*}$ is $(k, \bfM_{z} \otimes I)-$factorable. Finally, by Proposition \ref{chfnfac}, the pure $1/k-$contraction $\bfT$ admits a characterisitic function.
	\end{proof}
	
	\begin{remark}
		The proof shows that the $d-$tuple $\bfB_{\bfT}$ is a pure $1/k$-contraction whenever it is a $1/k$-contraction, i.e., whenever $\bfT$ admits a characteristic function.
	\end{remark}
	
	\section{A characterization of unitarily invariant CNP kernels} \label{CNPcharacterization}
	This section contains the major result Theorem \ref{CNP characterization}. The proof crucially uses Theorem \ref{characterization}.
	
	\begin{definition} \label{admitchfn}
		An admissible kernel $k$ is said to admit a characteristic function if every pure $1/k$-contraction admits a characteristic function.
	\end{definition}
	
	\subsection{Sufficiency of the complete Nevanlinna-Pick property}
	Note  that \eqref{bn} can also be written as $$ \sum\limits_{\alpha\in\mathbb{Z}^{d}_{+}} a_{\alpha} \bm{z}^{\alpha} \overline{\bm{w}^{\alpha}} = 1 + \left(\sum\limits_{\alpha\in\mathbb{Z}^{d}_{+} \backslash \{0\}} b_{\alpha} \bm{z}^{\alpha} \overline{\bm{w}^{\alpha}} \right) \left(\sum\limits_{\alpha\in\mathbb{Z}^{d}_{+}} a_{\alpha} \bm{z}^{\alpha} \overline{\bm{w}^{\alpha}} \right).$$ This leads us to the following relation:
	\begin{equation} \label{coeff rel}
		a_{\alpha} = \sum\limits_{\beta \in\mathbb{Z}^{d}_{+} \backslash \{0\}} b_{\beta} a_{\alpha - \beta}
	\end{equation}
	for all $\alpha \in \mathbb{Z}^{d}_{+} \backslash \{0\}.$ In the series in \eqref{coeff rel}, only finitely many summands are non-zero. Also, if the kernel is a unitarily invariant CNP kernel, then all the summands are positive.
	
	Recall from \cite{CH} that if $s$ is a unitarily invariant CNP kernel then $\bfM_{\bm{z}} = (M_{z_{1}} , \dots , M_{z_{d}})$ on $H_{s}$ is a $1/s-$contraction and so is $\bfM_{\bm{z}} \otimes I_{\cE}$ for any Hilbert space $\cE.$
	
	\begin{thm}
		If $s$ is a unitarily invariant CNP kernel, then $s$ admits a characteristic function, i.e., every pure $1/s-$contraction admits a characteristic function.
	\end{thm}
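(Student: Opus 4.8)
The plan is to reduce everything to Theorem \ref{characterization}, which says that the pure $1/s$-contraction $\bfT$ admits a characteristic function precisely when the associated tuple $\bfB_{\bfT}$ is a $1/s$-contraction. Thus it suffices to verify this latter condition, and the entire weight of the argument falls on the complete Nevanlinna-Pick hypothesis, which by Lemma \ref{CNPK} guarantees that the coefficients $\{b_{n}\}$, and hence all the $b_{\alpha} = b_{|\alpha|}\binom{|\alpha|}{\alpha}$, are non-negative.

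First I would recall that $\bfB_{\bfT}$ is, by Definition \ref{assoctuple}, the restriction of $\bfM_{\bm{z}} \otimes I$ to the invariant subspace $M = {\rm Ker}\, V_{\bfT}^{*}$, and that $\bfM_{\bm{z}} \otimes I$ is itself a $1/s$-contraction on $H_{s} \otimes \overline{\rm Ran}\, \Delta_{\bfT}$ (as recalled just before the statement). Writing $P$ for the orthogonal projection onto $M$, for $h \in M$ one has $(\bfB_{\bfT}^{\alpha})^{*} h = P (\bfM_{\bm{z}}^{\alpha} \otimes I)^{*} h$, so that
$$\langle \bfB_{\bfT}^{\alpha}(\bfB_{\bfT}^{\alpha})^{*} h, h \rangle = \| P (\bfM_{\bm{z}}^{\alpha} \otimes I)^{*} h \|^{2} \le \| (\bfM_{\bm{z}}^{\alpha} \otimes I)^{*} h \|^{2} = \langle (\bfM_{\bm{z}}^{\alpha} \otimes I)(\bfM_{\bm{z}}^{\alpha} \otimes I)^{*} h, h \rangle,$$
the inequality being exactly the contractivity of the projection $P$.

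The crucial step is now to multiply by $b_{\alpha}$ and sum. Because $b_{\alpha} \ge 0$, the inequality above survives summation: for every $N$,
$$\sum_{1 \le |\alpha| \le N} b_{\alpha} \langle \bfB_{\bfT}^{\alpha}(\bfB_{\bfT}^{\alpha})^{*} h, h \rangle \le \sum_{1 \le |\alpha| \le N} b_{\alpha} \langle (\bfM_{\bm{z}}^{\alpha} \otimes I)(\bfM_{\bm{z}}^{\alpha} \otimes I)^{*} h, h \rangle \le \| h \|^{2},$$
the last bound coming from $\bfM_{\bm{z}} \otimes I$ being a $1/s$-contraction. The partial sums on the left are increasing in $N$ (again since $b_{\alpha} \ge 0$) and uniformly bounded by $\|h\|^{2}$, so they converge strongly; passing to the limit gives $I_{M} - \sum_{\alpha \ne 0} b_{\alpha} \bfB_{\bfT}^{\alpha}(\bfB_{\bfT}^{\alpha})^{*} \ge 0$. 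Hence $\bfB_{\bfT}$ is a $1/s$-contraction, and Theorem \ref{characterization} finishes the proof.

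I expect the only delicate point to be the bookkeeping of strong convergence of the defining series for $\bfB_{\bfT}$; but this is precisely where non-negativity of the $b_{\alpha}$ does the work, turning the partial sums into a monotone bounded net. It is worth emphasizing that this monotonicity is exactly what can fail for a general admissible kernel, where some $b_{\alpha}$ may be negative and the restriction to an invariant subspace need not remain a $1/k$-contraction; the CNP property is what rescues the argument.
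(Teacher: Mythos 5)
Your proposal is correct and follows essentially the same route as the paper: reduce to Theorem \ref{characterization}, then show that the restriction of $\bfM_{\bm{z}}\otimes I$ to the invariant subspace ${\rm Ker}\,V_{\bfT}^{*}$ remains a $1/s$-contraction by dropping the projection in $\|P(\bfM_{\bm{z}}^{\alpha}\otimes I)^{*}h\|^{2}\le\|(\bfM_{\bm{z}}^{\alpha}\otimes I)^{*}h\|^{2}$ and summing against the non-negative $b_{\alpha}$ supplied by Lemma \ref{CNPK}. The paper phrases this as operator inequalities for arbitrary invariant subspaces rather than inner products against a fixed vector, but the argument is the same.
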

	\begin{proof}
		By Theorem \ref{characterization}, it is enough to show that the restriction of $\bfM_{\bm{z}} \otimes I_{\cE}$ to a closed invariant subspace $\cM$ of $H_{s} \otimes \cE$ is a $1/s-$contraction for any Hilbert space $\cE.$   Let $P_{\cM}$ be the projection of $H_{s} \otimes \cE$ onto the subspace $\cM.$ Let $T_{i}  = (M_{z_{i}} \otimes I_{\cE})|_{\cM}$ for all $i = 1, \dots , d.$ For any $N \geq 1,$
		\begin{align*}
			\sum\limits_{1 \leq |\alpha| \leq N} b_{\alpha} \bfT^{\alpha} (\bfT^{\alpha})^{*}
			& = \sum\limits_{1 \leq |\alpha| \leq N} b_{\alpha} P_{\cM} (\bfM_{\bm{z}}^{\alpha} \otimes I_{\cE}) P_{\cM} ((\bfM_{\bm{z}}^{\alpha})^{*} \otimes I_{\cE})|_{\cM}\\
			& \leq \sum\limits_{1 \leq |\alpha| \leq N} b_{\alpha} P_{\cM} (\bfM_{\bm{z}}^{\alpha} \otimes I_{\cE}) ((\bfM_{\bm{z}}^{\alpha})^{*} \otimes I_{\cE})|_{\cM} \hspace{5mm} \text{(since $b_{\alpha} \geq 0$)}\\
			& \leq \sum\limits_{\alpha\in\mathbb{Z}^{d}_{+} \backslash \{0\}} b_{\alpha} P_{\cM} (\bfM_{\bm{z}}^{\alpha} \otimes I_{\cE}) ((\bfM_{\bm{z}}^{\alpha})^{*} \otimes I_{\cE})|_{\cM} \hspace{5mm} \text{(since $b_{\alpha} \geq 0$)}\\
			& \leq I_{\cM} \hspace{5mm} \text{(since $\bfM_{\bm{z}} \otimes I_{\cE}$ is a $1/s-$ contraction)}.
		\end{align*}
		Since this holds for all $N \geq 1,$ the $d-$tuple $\bfT = (T_{1}, \dots, T_{d})$ is a $1/s-$contraction. This completes the proof.
	\end{proof}
	
	Interestingly, the Bergman kernel does not admit a characteristic function in the sense of Definition \ref{admitchfn} as is shown below.
	
	\begin{proposition}
		The generalized Bergman kernels $k_{m}$ defined in \eqref{km} do not admit characteristic functions for $m\geq 2.$
	\end{proposition}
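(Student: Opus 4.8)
The plan is to produce a single pure $1/k_m$-contraction that fails to admit a characteristic function; by Definition \ref{admitchfn} this is all that is required. By Theorem \ref{characterization}, a pure $1/k_m$-contraction $\bfT$ admits a characteristic function if and only if the associated tuple $\bfB_{\bfT}=(\bfM_{\bm z}\otimes I)|_{{\rm Ker}\,V_{\bfT}^{*}}$ is a $1/k_m$-contraction, so it suffices to exhibit a pure $\bfT$ for which $\bfB_{\bfT}$ violates the $1/k_m$-contractivity inequality. The reason such a $\bfT$ should exist is the sign of the coefficients: for $k_m=(1-\la\bm z,\bm w\ra)^{-m}$ one has $1-1/k_m=\sum_{n\geq 1}(-1)^{n+1}\binom{m}{n}\la\bm z,\bm w\ra^{n}$, so that $b_1=m>0$ while $b_2=-\binom{m}{2}<0$ as soon as $m\geq 2$. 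I expect it is exactly this sign change, together with the compression built into $\bfB_{\bfT}$, that destroys positivity.

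The candidate I would take is the \emph{zero tuple} $\bfT=(0,\dots,0)$ acting on $\cH=\mathbb C$. Since $\bfT^{\alpha}=0$ for every $\alpha\neq 0$, one reads off $\Delta_{\bfT}=1$ and $\overline{\rm Ran}\,\Delta_{\bfT}=\mathbb C$, and pureness in the sense of Definition \ref{pure} is immediate because only the $\alpha=0$ term of $\sum_{\alpha}a_{\alpha}\bfT^{\alpha}\Delta_{\bfT}^{2}(\bfT^{\alpha})^{*}$ survives, giving $a_0=1=I$. Feeding $\bfT$ into Theorem \ref{V_T}, the isometry $V_{\bfT}\colon\mathbb C\to H_{k_m}\otimes\mathbb C\cong H_{k_m}$ sends $1$ to the constant function $\mathbf 1$, so its range is $\mathbb C\mathbf 1$ and hence ${\rm Ker}\,V_{\bfT}^{*}=\{f\in H_{k_m}:f(\mathbf 0)=0\}$. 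Consequently $\bfB_{\bfT}$ is nothing but the shift tuple $\bfM_{\bm z}$ restricted (the subspace is invariant) to the functions vanishing at the origin.

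It then remains to check by hand that this $\bfB_{\bfT}$ is not a $1/k_m$-contraction, and I would test the defining form on the single vector $z_1^{2}\in{\rm Ker}\,V_{\bfT}^{*}$. Writing $(\bfB_{\bfT}^{\alpha})^{*}=P\,(\bfM_{\bm z}^{\alpha})^{*}|_{{\rm Ker}\,V_{\bfT}^{*}}$ with $P$ the projection onto ${\rm Ker}\,V_{\bfT}^{*}$, every off-axis derivative $M_{z_i}^{*}z_1^{2}$ $(i\neq 1)$ vanishes, the second-order term $(M_{z_1}^{*})^{2}z_1^{2}$ is a multiple of the constant $\mathbf 1$ and is therefore annihilated by $P$, and only the first-order term $M_{z_1}^{*}z_1^{2}=\tfrac{2}{m+1}z_1$ survives. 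Using $b_{e_1}=b_1=m$, a short computation then gives
$$\Big\langle\Big(I-\sum_{\alpha\neq 0}b_{\alpha}\bfB_{\bfT}^{\alpha}(\bfB_{\bfT}^{\alpha})^{*}\Big)z_1^{2},z_1^{2}\Big\rangle=\frac{1}{a_2}-m\Big(\frac{2}{m+1}\Big)^{2}\|z_1\|^{2}=\frac{2(1-m)}{m(m+1)^{2}},$$
which is strictly negative for $m\geq 2$.

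Hence $\bfB_{\bfT}$ fails the $1/k_m$-contraction inequality, so by Theorem \ref{characterization} the pure contraction $\bfT$ admits no characteristic function, and $k_m$ does not admit a characteristic function in the sense of Definition \ref{admitchfn}. I expect the only genuine obstacle to be the bookkeeping in the last step: one must recognise that passing to ${\rm Ker}\,V_{\bfT}^{*}$ kills precisely the stabilising second-order ($2e_1$) contribution, so that the negativity is driven by the positive coefficient $b_1$ acting alone. The same vector $z_1^{2}$ works verbatim for every $d\geq 1$, since only the $z_1$-direction enters the computation.
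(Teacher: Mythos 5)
Your proof is correct and follows essentially the same route as the paper: the paper tests the truncated shift tuples $\bfT_{N}=P_{\cH_{N}}\bfM_{\bm z}|_{\cH_{N}}$ for every $N\geq 0$ against Theorem \ref{characterization}, and your zero tuple on $\mathbb C$ is exactly the case $N=0$ (the compression of $\bfM_{\bm z}$ to the constants), with your test vector $z_1^{2}$ being the paper's $e_{m}(\bm{N+2})$ up to normalization. Your computation $\frac{1}{a_2}-b_1\left(\frac{2}{m+1}\right)^{2}\|z_1\|^{2}=\frac{2(1-m)}{m(m+1)^{2}}<0$ agrees with the paper's $1-m\frac{N+2}{m+N+1}<0$ at $N=0$, and a single counterexample is indeed all that Definition \ref{admitchfn} requires.
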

	\begin{proof}
		For $\bm{z} , \bm{w} \in \mathbb{B}_{d}$, the expression for the generalized Bergman kernel $k_{m}$ is $k_m(\bm{z}, \bm{w}) = \sum\limits_{\alpha\in\mathbb{Z}^{d}_{+}} \sigma_{m}(\alpha) \bm{z}^{\alpha} \overline{\bm{w}^{\alpha}}$
		where
		\begin{equation}\label{coeff}
			\sigma_{m}(\alpha) = \frac{(m+ |\alpha| -1) !}{\alpha! (m-1)!}	\hspace{5mm} (\alpha\in\mathbb{Z}^{d}_{+}).
		\end{equation}
		We shall denote the Hilbert space corresponding to the kernel $k_{m}$ by $\mathbb{H}_{m}.$ Recall that $\{ e_{m}(\alpha) = \sqrt{\sigma_{m}(\alpha)} \bm{z}^{\alpha} : \alpha\in\mathbb{Z}^{d}_{+}\}$ is an orthonormal basis for the Hilbert space $\mathbb{H}_{m}.$
		
		For $N \geq 0,$ define the subspace $\cH_{N} = {\rm span} \{  \bm{z}^{\alpha} : |\alpha| \leq N\}$ and define an operator tuple acting on $\cH_{N}$ by $\bfT_{N} = P_{\cH_{N}} \bfM_{\bm{z}}|_{\cH_{N}},$ where $P_{\cH_{N}}$ is the projection of $\mathbb{H}_{m}$ onto $\cH_{N}.$ It is easy to check that the $d-$tuple $\bfT_{N}$ is a pure $1/k_{m}-$contraction, $\Delta_{\bfT_{N}} = E_0$ and $$\cM_{N} \bydef {\rm Ker} V_{\bfT_{N}}^{*} = \overline{\rm span}\{ \bm{z}^{\alpha} : |\alpha| \geq N+1\}.$$ Let $P_{\cM_{N}}$ be the projection from $\mathbb{H}_{m}$ onto $\cM_{N}.$ Let $\bm n$ denote the multi-index $(n,0, \dots, 0).$
		
		We shall show that the pure $1/k_{m}-$contraction $\bfT_{N}$ does not admit a characteristic function for any $N \ge 0$. To that end, we consider the associated operator $d-$tuple of Definition \ref{assoctuple}. Note that
		\begin{align*}
			& \left\la \left( I_{\cM_{N}} - \sum\limits_{\alpha\in\mathbb{Z}^{d}_{+} \backslash \{0\}} b_{\alpha} \bfM_{\bm{z}}^{\alpha} P_{\cM_{N}} (\bfM_{\bm{z}}^{\alpha})^{*} \right) e_{m}(\bm{N+2}) , e_{m}(\bm{N+2}) \right\ra \\
			= & \left\la ( I - b_{\bm{1}} M_{z_{1}} P_{\cM_{N}} M_{z_{1}}^{*} )e_{m}(\bm{N+2}) , e_{m}(\bm{N+2}) \right\ra \\
			= & 1 - \sigma_{m}(\bm {1}) \frac{\sigma_{m}(\bm{N+1})}{\sigma_{m}(\bm{N+2})} \hspace{5mm} (b_{\bm{1}} = \sigma_{m}(\bm{1}) \text{ by } \eqref{coeff rel}).
		\end{align*}
		By putting values of the coefficients from \eqref{coeff} we get
		\begin{align*}
			\sigma_{m}(\bm{1}) \frac{\sigma_{m}(\bm{N+1})}{\sigma_{m}(\bm{N+2})}
			& = \frac{m!}{(m-1)!} \frac{(m + |\bm{N+1}| -1)!}{ (\bm{N+1})! (m-1)!} \frac{(\bm{N+2}) ! (m-1)!}{(m + |\bm{N+2}| -1)!}\\
			& = m \frac{(m+N)!}{(N+1)!} \frac{(N+2)!}{(m+ N+1)!}\\
			& = m \frac{N+2}{m+N+1} > 1 \hspace{5mm} \text{(for $m \geq 2$)}.
		\end{align*}
		So the associated tuple in not a $1/k_{m}-$contraction. By Theorem \ref{characterization}, the $d-$tuple $\bfT_{N}$ does not admit a characteristic function for any $N \geq 0.$ This completes the proof. \end{proof}
	
	\subsection{Necessity of the complete Nevanlinna-Pick property}
	
	The above raises the natural question of characterizing all admissible kernels which admit a characteristic function. We shall answer this question completely. Note that the $d-$tuple $(0,\dots,0)$ of identically zero operators on any Hilbert space $\cH$ is a pure $1/k$-contraction for any admissible kernel $k$ on $\mathbb{B}_{d}.$
	
	\begin{thm}\label{CNP characterization}
		Let $k$ be an admissible kernel on $\mathbb B_d$. Then the following are equivalent.
		\begin{enumerate}
			\item The $d-$tuple $(0,\dots,0)$ of identically zero operators admits a characteristic function.
			\item The kernel $k$ is unitarily invariant CNP.
			\item Any pure $1/k$-contraction admits a characteristic function.
		\end{enumerate}
	\end{thm}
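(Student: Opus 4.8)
The plan is to establish the cyclic chain of implications $(3) \Rightarrow (1) \Rightarrow (2) \Rightarrow (3)$, of which two links are essentially free. The implication $(3) \Rightarrow (1)$ is immediate: as noted just before the theorem, the zero tuple $(0, \dots, 0)$ is itself a pure $1/k$-contraction, so if every pure $1/k$-contraction admits a characteristic function then so does this one. The implication $(2) \Rightarrow (3)$ is precisely the theorem established in the preceding subsection on sufficiency of the complete Nevanlinna-Pick property. Hence the whole weight of the argument rests on $(1) \Rightarrow (2)$.

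To prove $(1) \Rightarrow (2)$, I would first unwind the objects attached to the zero tuple $\bfT = (0, \dots, 0)$; since the computation is insensitive to the underlying space, I take it to act on $\mathbb{C}$ for concreteness. Because $\bfT^{\alpha} = 0$ for every $\alpha \neq 0$, one gets $\Delta_{\bfT} = I$, so $\overline{\rm Ran}\,\Delta_{\bfT} = \mathbb{C}$, and the isometry of Theorem \ref{V_T} collapses to $V_{\bfT}(c) = a_{0}\,\bm{z}^{0} \otimes c = c$, the constant function. Thus $V_{\bfT}V_{\bfT}^{*} = E_{0}$ and $I - V_{\bfT}V_{\bfT}^{*}$ is the projection of $H_{k}$ onto $\cM := H_{k} \ominus \mathbb{C}$, the subspace of functions vanishing at the origin, and the associated tuple of Definition \ref{assoctuple} is $\bfB_{\bfT} = (M_{z_{1}}|_{\cM}, \dots, M_{z_{d}}|_{\cM})$. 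By Theorem \ref{characterization}, the hypothesis (1) is equivalent to the assertion that $\bfB_{\bfT}$ is a $1/k$-contraction.

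The heart of the matter is then to test the positivity of the defect operator against the orthonormal basis $e(\gamma) = \sqrt{a_{\gamma}}\,\bm{z}^{\gamma}$ with $\gamma \neq 0$. Writing $(\bfB_{\bfT}^{\alpha})^{*} e(\gamma) = P_{\cM}(\bfM_{\bm{z}}^{\alpha})^{*} e(\gamma)$ and using the standard adjoint formula $(\bfM_{\bm{z}}^{\alpha})^{*} e(\gamma) = \sqrt{a_{\gamma - \alpha}/a_{\gamma}}\, e(\gamma - \alpha)$ (read as zero when $\gamma - \alpha \notin \mathbb{Z}^{d}_{+}$), one finds $\|(\bfB_{\bfT}^{\alpha})^{*} e(\gamma)\|^{2} = a_{\gamma - \alpha}/a_{\gamma}$ for those $\alpha$ with $0 \neq \alpha \leq \gamma$ and $\alpha \neq \gamma$, and $0$ otherwise; the case $\alpha = \gamma$ drops out precisely because $P_{\cM}$ kills the constant $e(0)$. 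Only finitely many $\alpha$ survive, so I may work with a genuine finite sum, and invoking the coefficient relation \eqref{coeff rel} after separating off its top term $b_{\gamma}$ telescopes the expression to
$$\left\langle \Big( I_{\cM} - \sum\limits_{\alpha\in\mathbb{Z}^{d}_{+} \backslash \{0\}} b_{\alpha}\, \bfB_{\bfT}^{\alpha} (\bfB_{\bfT}^{\alpha})^{*} \Big) e(\gamma),\, e(\gamma) \right\rangle = \frac{b_{\gamma}}{a_{\gamma}}.$$
Since $a_{\gamma} > 0$, the $1/k$-contractivity of $\bfB_{\bfT}$ forces $b_{\gamma} \geq 0$ for every $\gamma \neq 0$, hence $b_{n} \geq 0$ for all $n \geq 1$ because $b_{\alpha} = b_{|\alpha|}\binom{|\alpha|}{\alpha}$. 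By Lemma \ref{CNPK}, $H_{k}$ is then an irreducible complete Nevanlinna-Pick space, and so $k$ is a unitarily invariant CNP kernel, which is (2).

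I expect the main obstacle to be the bookkeeping in that diagonal computation: correctly tracking which multi-indices survive the projection $P_{\cM}$, and recognizing that the coefficient relation \eqref{coeff rel} is exactly what is needed to collapse the finite sum to the single term $b_{\gamma}/a_{\gamma}$. Everything else is organizational.
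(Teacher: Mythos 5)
Your proposal is correct and follows essentially the same route as the paper: the only substantive implication is $(1)\Rightarrow(2)$, handled by identifying the associated tuple of the zero tuple as $\bfM_{\bm z}$ restricted to $H_k\ominus\mathbb{C}$, invoking Theorem \ref{characterization}, and testing the defect operator diagonally against monomials so that \eqref{coeff rel} collapses the sum to $b_{\gamma}/a_{\gamma}\geq 0$. The paper tests only against $e(\bm{n+1})$ with $\bm{n+1}=(n+1,0,\dots,0)$ and handles $b_1=a_1$ separately, whereas you test against all $e(\gamma)$; this is an immaterial difference.
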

	
	\begin{proof}
		
		\leavevmode
		
		$(1) \Rightarrow (2)$:  Let $\cH$ be the space of all constant functions in $H_{k}.$ Let $\bfT = (0, \dots, 0) $ be the $d-$tuple of identically zero operators on $\cH.$ Then $\Delta_{\bfT} = I_{\cH}.$ Recall that $E_{0}$ is the projection of $H_{k}$ onto the space of constant functions. If $V_{\bfT}$ is the isometry obtained in Theorem \ref{V_T}, then $V_{\bfT} V_{\bfT}^{*} = E_{0}.$ Let $e(\alpha) = \sqrt{a_{\alpha}} \bm{z}^{\alpha}$ and $\bm{n} = (n,0, \dots, 0).$ Since $\bfT$ admits a characteristic function, the associated operator tuple in Definition \ref{assoctuple} is a $1/k-$contraction, see Theorem \ref{characterization}. This means that for all $n\geq 1$ we have
		\begin{align*}
			\left\la \left( P_{\cH^{\perp}} - \sum\limits_{\alpha\in\mathbb{Z}^{d}_{+} \backslash \{0\}} b_{\alpha} \bfM_{\bm{z}}^{\alpha} P_{\cH^{\perp}} (\bfM_{\bm{z}}^{\alpha})^{*} \right) e(\bm{n+1}) , e(\bm{n+1}) \right\ra \geq 0.
		\end{align*}
		In other words,
		\begin{align*}
			0
			& \leq \left\la \left(P_{\cH^{\perp}} - \sum\limits_{i=1}^{n} b_{\bm{i}} \bfM_{\bm{z}}^{\bm{i}} P_{\cH^{\perp}} (\bfM_{\bm{z}}^{\bm{i}})^{*} \right) e(\bm{n+1}) , e(\bm{n+1}) \right\ra \\
			& = \left\la \left( P_{\cH^{\perp}} - \sum\limits_{i=1}^{n} b_{i} M_{z_{1}}^{i} P_{\cH^{\perp}} (M_{z_{1}}^{i})^{*} \right)   e(\bm{n+1}) , e(\bm{n+1}) \right\ra \\
			& = 1 - \sum\limits_{i=1}^{n} b_{\bm{i}} \frac{a_{\bm{n+1-i}}}{a_{\bm{n+1}}} \\
			& = \frac{1}{a_{\bm{n+1}}} \sum\limits_{i=1}^{n+1} b_{\bm{i}} a_{\bm{n+1-i}} - \frac{1}{a_{\bm{n+1}}} \sum\limits_{i=1}^{n} b_{\bm{i}} a_{\bm{n+1-i}} \hspace{5mm} \text{(by \eqref{coeff rel})} \\
			& = \frac{b_{\bm{n+1}}}{a_{\bm{n+1}}} = \frac{b_{n+1}}{a_{n+1}} \hspace{5mm} \text{(by \eqref{a_alpha})}.
		\end{align*}
		This implies $b_{n+1} \geq 0$ for all $n \geq 1.$ Since $b_{1} = a_{1},$ we get $b_{n} \geq 0$ for all $n \geq 1.$ Now by Lemma \ref{CNPK}, $k$ is a unitarily invariant CNP kernel. The implication $(2) \Rightarrow (3)$ has already been proved and for $(3) \Rightarrow (1)$, there is nothing to prove.
	\end{proof}

	\section{Explicit construction of the characteristic function}
	Recall that a unitarily invariant CNP kernel $s$ is a reproducing kernel of the form
	\begin{equation}\label{sdef}
		s(\bm{z}, \bm{w}) = \sum\limits_{n=0}^{\infty} a_{n} \langle \bm{z} , \bm{w} \rangle^{n}
	\end{equation}
	having the following properties:
	\begin{enumerate}
		\item $a_{0} =1$ and $a_{n} >0$ for all $n \geq 1,$ and
		\item $H_{s}$ is an irreducible complete Nevanlinna-Pick space.
	\end{enumerate}
	By Lemma \ref{CNPK}, for a unitarily invariant CNP kernel, \eqref{bn} holds for all $\bm{z}, \bm{w}\in\mathbb{B}_{d}$ and  the sequence $\{b_{n}\}_{n=1}^{\infty}$ defined by \eqref{bn} is a sequence of non-negative real numbers.
	
	\subsection{$1/s-$contractions}
	We shall give an explicit construction of the characteristic function for any $1/s-$contraction (not necessarily pure). The next lemma plays an important role in extending the notion of characteristic function beyond pure $1/s-$contractions and also clarifies the motivation for definition of a pure $1/s-$contraction.
	
	\begin{lemma}
		If $\bfT = (T_{1}, \hdots,T_{d})$ is a $1/s$-contraction, then the series $$\sum\limits_{\alpha\in\mathbb{Z}^{d}_{+}} a_{\alpha} \bfT^{\alpha} \Delta_{\bfT}^{2} (\bfT^{\alpha})^{*}$$ converges in the strong operator topology and the limiting operator is a positive contraction.
	\end{lemma}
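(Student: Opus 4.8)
The plan is to exploit the positivity of the coefficients $b_\alpha$ (guaranteed by Lemma~\ref{CNPK}, since $s$ is CNP) together with the recursion \eqref{coeff rel}, and then to invoke monotone convergence of positive operators. Write $C = \sum_{\alpha \in \mathbb{Z}^d_+ \setminus \{0\}} b_\alpha \bfT^\alpha (\bfT^\alpha)^*$, which converges strongly by the very definition of a $1/s$-contraction, so that $\Delta_{\bfT}^2 = I - C$. Set
$$Q_N = \sum_{|\alpha| \le N} a_\alpha \bfT^\alpha \Delta_{\bfT}^2 (\bfT^\alpha)^*.$$
Since $a_\alpha > 0$ and $\Delta_{\bfT}^2 \ge 0$, each $Q_N$ is positive and $Q_{N+1} - Q_N \ge 0$, so $\{Q_N\}$ is increasing. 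The whole statement then reduces to the single inequality $Q_N \le I$: an increasing sequence of positive operators bounded above by $I$ converges in the strong operator topology to a positive operator that is again $\le I$, that is, to a positive contraction.

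To prove $Q_N \le I$, I would substitute $\Delta_{\bfT}^2 = I - C$ and use commutativity of the $T_i$ (so that $\bfT^\alpha \bfT^\beta (\bfT^\beta)^* (\bfT^\alpha)^* = \bfT^{\alpha+\beta}(\bfT^{\alpha+\beta})^*$) to write
$$Q_N = \sum_{|\alpha| \le N} a_\alpha \bfT^\alpha (\bfT^\alpha)^* - \sum_{|\alpha| \le N} \sum_{\beta \in \mathbb{Z}^d_+ \setminus \{0\}} a_\alpha b_\beta \, \bfT^{\alpha+\beta}(\bfT^{\alpha+\beta})^*.$$
The key step is to reindex the double sum by $\gamma = \alpha + \beta$. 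As every summand is a positive operator and only finitely many pairs $(\alpha,\beta)$ contribute to each $\gamma$, this rearrangement is legitimate for the strongly convergent series and produces $\sum_\gamma c_{N,\gamma}\,\bfT^\gamma(\bfT^\gamma)^*$ with
$$c_{N,\gamma} = \sum_{\substack{\beta \le \gamma,\ \beta \neq 0 \\ |\gamma - \beta| \le N}} b_\beta \, a_{\gamma - \beta} \ \ge\ 0.$$

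Now the arithmetic of the coefficients does the work, and this is where the CNP hypothesis enters. For $1 \le |\gamma| \le N$ the constraint $|\gamma - \beta| \le N$ is automatic, so by \eqref{coeff rel} one has $c_{N,\gamma} = \sum_{\beta \le \gamma,\, \beta \neq 0} b_\beta a_{\gamma-\beta} = a_\gamma$; these terms cancel exactly against the matching terms of the first sum, while the $\gamma = 0$ term of the first sum contributes $a_0 I = I$. For $|\gamma| > N$ the first sum contributes nothing, and $c_{N,\gamma}$ is a partial sum of the nonnegative series defining $a_\gamma$, whence $0 \le c_{N,\gamma} \le a_\gamma$. Collecting everything yields the clean identity
$$Q_N = I - \sum_{|\gamma| > N} c_{N,\gamma}\, \bfT^\gamma (\bfT^\gamma)^*,$$
in which the subtracted operator is a strongly convergent sum of positive operators, hence positive; therefore $Q_N \le I$ for every $N$.

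I expect the only genuinely delicate point to be justifying the reindexing $\gamma = \alpha + \beta$ of the (infinite in $\beta$) double sum and the strong convergence of the tail $\sum_{|\gamma|>N} c_{N,\gamma}\bfT^\gamma(\bfT^\gamma)^*$; both are controlled by the strong convergence of $C$ and by the positivity of all rearranged terms, so one may compare finite partial sums and pass to suprema. It is precisely the nonnegativity of the $b_\alpha$ that makes $c_{N,\gamma} \ge 0$ and $c_{N,\gamma} \le a_\gamma$; for a non-CNP admissible kernel this fails, consistent with the Bergman counterexample established earlier in the paper.
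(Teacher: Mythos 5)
Your proof is correct and is essentially the paper's own argument: both show that the partial sums form an increasing sequence of positive operators bounded above by $I$, by substituting $\Delta_{\bfT}^{2} = I - \sum_{\beta} b_{\beta}\bfT^{\beta}(\bfT^{\beta})^{*}$, reindexing the resulting double sum by $\gamma = \alpha+\beta$, and invoking the recursion \eqref{coeff rel} together with the nonnegativity of the $b_{\alpha}$ coming from the CNP hypothesis. The only cosmetic difference is that you record the exact identity $Q_{N} = I - \sum_{|\gamma|>N} c_{N,\gamma}\,\bfT^{\gamma}(\bfT^{\gamma})^{*}$ with $0 \le c_{N,\gamma} \le a_{\gamma}$, whereas the paper works with the quadratic form $\langle \cdot\, h, h\rangle$ and simply drops the nonnegative tail terms to obtain the bound $\|h\|^{2}$.
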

	
	\begin{proof} The positivity is clear if we can show the convergence. We shall show that the partial sums form an increasing sequence of positive operators bounded above by the identity operator. Hence the sequence will converge in strong operator topology with the limiting operator bounded above by the identity operator. To that end, consider   $h\in\mathcal{H}$. Then for any $N \geq 1,$
		\begin{align*}
			\sum\limits_{|\alpha| \leq N} a_{\alpha} \langle \bfT^{\alpha} \Delta_{\bfT}^{2}(\bfT^{\alpha})^{*}h,h\rangle
			& = \sum\limits_{|\alpha| \leq N} a_{\alpha} \left\la \bfT^{\alpha}\left(I - \sum\limits_{\beta\in\mathbb{Z}^{d}_{+} \backslash \{0\}} b_{\beta} \bfT^{\beta}(\bfT^{\beta})^{*}\right)(\bfT^{\alpha})^{*}h,h\right\ra\\
			& \leq \sum\limits_{|\alpha| \leq N} a_{\alpha} \| (\bfT^{\alpha})^{*} h \|^{2} - \sum\limits_{1 \leq |\alpha| \leq N} \sum\limits_{\beta\in\mathbb{Z}^{d}_{+} \backslash \{0\}} a_{\alpha - \beta} b_{\beta} \| (\bfT^{\alpha})^{*} h\|^{2} \\
			&  = \sum\limits_{|\alpha| \leq N} a_{\alpha} \| (\bfT^{\alpha})^{*} h \|^{2} -  \sum\limits_{1 \leq |\alpha| \leq N} a_{\alpha} \| (\bfT^{\alpha})^{*} h \|^{2} \hspace{5mm} \text{(by \eqref{coeff rel})}.
		\end{align*}
		The last quantity is $\| h\|^{2}$ and that completes the proof.
	\end{proof}
	
	As a direct consequence of this Lemma, we get the following Corollary.
	
	\begin{corollary} \label{V_Tagain}
		Let $\bfT = (T_{1}, \hdots,T_{d})$ be a $1/s-$contraction acting on a Hilbert space $\cH.$ Define an operator $V_{\bfT}:\cH \to H_{s} \otimes \overline{\rm Ran} \Delta_{\bfT}$ by $$h \mapsto \sum\limits_{\alpha\in\mathbb{Z}^{d}_{+}} a_{\alpha} \bm{z}^{\alpha} \otimes \Delta_{\bfT}(\bfT^{*})^{\alpha}h.$$  Then $V_{\bfT}$ is a contraction which satisfies $$V_{\bfT}^{*}(p(\bfM_{\bm{z}}) \otimes I_{\overline{\rm Ran} \Delta_{\bfT}}) = p(\bfT) V_{\bfT}^{*}$$ for all polynomials $p$ in $d$ complex variables.
	\end{corollary}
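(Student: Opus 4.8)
The plan is to read off both assertions from the preceding Lemma; the only change from Theorem \ref{V_T} is that $\bfT$ is no longer assumed pure, so the Lemma delivers a contraction instead of an isometry.

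First I would check that $V_{\bfT}$ is well defined and contractive. Using that the monomials $\{\bm{z}^{\alpha}\}$ are orthogonal in $H_{s}$ with $\|\bm{z}^{\alpha}\|^{2} = 1/a_{\alpha}$, and the identification of $H_{s}\otimes\overline{\rm Ran}\Delta_{\bfT}$ with the vector valued space $H_{s}(\overline{\rm Ran}\Delta_{\bfT})$, a termwise computation gives
$$\|V_{\bfT}h\|^{2} = \sum_{\alpha\in\mathbb{Z}^{d}_{+}} a_{\alpha}^{2}\,\|\bm{z}^{\alpha}\|^{2}\,\|\Delta_{\bfT}(\bfT^{\alpha})^{*}h\|^{2} = \sum_{\alpha\in\mathbb{Z}^{d}_{+}} a_{\alpha}\,\la \bfT^{\alpha}\Delta_{\bfT}^{2}(\bfT^{\alpha})^{*}h, h\ra.$$
The preceding Lemma says exactly that this series converges, so $V_{\bfT}h$ is a genuine element of $H_{s}\otimes\overline{\rm Ran}\Delta_{\bfT}$, and that its sum equals $\la(\sum_{\alpha} a_{\alpha}\bfT^{\alpha}\Delta_{\bfT}^{2}(\bfT^{\alpha})^{*})h, h\ra \le \|h\|^{2}$ because the limiting operator is a positive contraction. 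Hence $V_{\bfT}$ is a well-defined contraction.

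For the intertwining relation it suffices, after taking adjoints and using that $p\mapsto p(\bfT)$ and $p\mapsto p(\bfM_{\bm{z}})$ are multiplicative and that all operators in sight commute, to verify the single-variable identity $(M_{z_{j}}^{*}\otimes I)V_{\bfT} = V_{\bfT}T_{j}^{*}$ for each $j$. Writing $\bm{e}_{j}$ for the multi-index that is $1$ in the $j$-th slot and $0$ elsewhere, one has $M_{z_{j}}^{*}\bm{z}^{\alpha} = (a_{\alpha-\bm{e}_{j}}/a_{\alpha})\,\bm{z}^{\alpha-\bm{e}_{j}}$ when $\alpha_{j}\ge 1$ and $0$ otherwise. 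Applying $M_{z_{j}}^{*}\otimes I$ termwise to the convergent series $V_{\bfT}h$ and reindexing by $\gamma = \alpha-\bm{e}_{j}$ produces $\sum_{\gamma} a_{\gamma}\,\bm{z}^{\gamma}\otimes\Delta_{\bfT}(\bfT^{\gamma})^{*}T_{j}^{*}h$, which is precisely $V_{\bfT}T_{j}^{*}h$ since $(\bfT^{\gamma})^{*}T_{j}^{*} = (\bfT^{\gamma+\bm{e}_{j}})^{*}$. Taking adjoints gives $V_{\bfT}^{*}(M_{z_{j}}\otimes I_{\overline{\rm Ran}\Delta_{\bfT}}) = T_{j}V_{\bfT}^{*}$, and iterating over monomials and forming linear combinations upgrades this to $V_{\bfT}^{*}(p(\bfM_{\bm{z}})\otimes I_{\overline{\rm Ran}\Delta_{\bfT}}) = p(\bfT)V_{\bfT}^{*}$ for every polynomial $p$.

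There is no genuine obstacle here, as the statement is a corollary; the only points needing care are that the convergence furnished by the Lemma is what legitimizes both treating the defining series as a Hilbert space element and reindexing it termwise, and that the intertwining argument is purely algebraic and so identical to the pure case of Theorem \ref{V_T}. The weakening to $\le I$ rather than the equality $=I$ of the isometric case is exactly what the dropping of pureness forces, and nothing more.
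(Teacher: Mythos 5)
Your proof is correct and is exactly the argument the paper intends: the paper states this corollary without proof as a ``direct consequence'' of the preceding lemma, and your computation $\|V_{\bfT}h\|^{2}=\sum_{\alpha}a_{\alpha}\la \bfT^{\alpha}\Delta_{\bfT}^{2}(\bfT^{\alpha})^{*}h,h\ra\le\|h\|^{2}$ is precisely how that lemma yields contractivity, while the intertwining computation is the same algebraic argument underlying Theorem \ref{V_T}. No gaps.
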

	
	Note that for a pure $1/s$-contraction $\bfT$, the operator $V_{\bfT}$ defined here is the same as the one defined in Theorem \ref{V_T}. Here it is defined for a bigger class of operator tuples, albeit for a smaller class of kernels. By virtue of Corollary \ref{V_Tagain}, we can generalize Definition \ref{admitchfn} for unitarily invariant CNP kernels.
	\begin{definition} \label{admitchfn2}
		A $1/s-$contraction $\bfT = (T_{1}, \dots, T_{d})$ is said to admit a characteristic function if there exists a Hilbert space $\cE$ and a $\cB(\cE, \overline{\rm Ran} \Delta_{\bfT})-$valued analytic function $\theta_{\bfT}$ on $\mathbb{B}_{d}$ such that $M_{\theta_{\bfT}}$ is a multipication operator from $H_{s} \otimes \cE$ to $H_{s} \otimes \overline{\rm Ran} \Delta_{\bfT}$ satisfying $$ I - V_{\bfT} V_{\bfT}^{*} = M_{\theta_{\bfT}} M_{\theta_{\bfT}}^{*} .$$
	\end{definition}
	
	\subsection{Taylor spectrum}
	Let $r\mathbb{B}_{d}$ be the open ball in $\mathbb{C}^{d}$ centered at origin of radius $r$. For $r=1,$ the following lemma is due to Hartz. The proof for any $r \ge 1$ is similar to the proof of [\cite{H}, Lemma 2.3].
	
	\begin{lemma}
		Let $s$ be a unitarily invariant CNP kernel defined in \eqref{sdef}. If the power series $\sum_{n=0}^{\infty} a_{n} t^{n}$ has radius of convergence $r \geq 1,$ then $s(\bm{z}, \bm{w}) \neq 0$ for all $\bm{z}, \bm{w}\in \sqrt{r}  \mathbb{B}_{d}.$
	\end{lemma}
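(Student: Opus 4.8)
The plan is to reduce everything to the two single-variable power series $f(t) = \sum_{n=0}^{\infty} a_n t^n$ and $g(t) = \sum_{n=1}^{\infty} b_n t^n$, exploiting that for $\bm z, \bm w \in \sqrt r\,\mathbb B_d$ the Cauchy--Schwarz inequality gives $|\langle \bm z, \bm w\rangle| \le \|\bm z\|\,\|\bm w\| < r$. Since $s(\bm z, \bm w) = f(\langle \bm z, \bm w\rangle)$ with the argument thereby confined to the open disc of radius $r$ where $f$ converges, it suffices to prove that $f(\lambda) \ne 0$ for every complex $\lambda$ with $|\lambda| < r$.

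First I would record the scalar form of the coefficient relation: reading \eqref{coeff rel} on the diagonal (equivalently, comparing coefficients in \eqref{bn} with $t = \langle \bm z, \bm w\rangle$, i.e.\ in the formal identity $f(t)\bigl(1 - g(t)\bigr) = 1$) yields $a_n = \sum_{k=1}^{n} b_k a_{n-k}$ for $n \ge 1$. Because $s$ is a CNP kernel, Lemma \ref{CNPK} guarantees $b_k \ge 0$, and since $a_0 = 1$ while every $a_j > 0$, retaining only the $k = n$ term gives the key comparison $b_n \le a_n$ for all $n \ge 1$. This is the step I expect to carry the real weight: it forces the radius of convergence of $g$ to be at least $r$, so that $\sum_{n\ge1} b_n |\lambda|^n \le \sum_{n \ge 0} a_n |\lambda|^n < \infty$ for $|\lambda| < r$, and consequently the formal identity $f(t)\bigl(1 - g(t)\bigr) = 1$ upgrades, via the Cauchy product of two absolutely convergent series, to a genuine analytic identity valid for all $|\lambda| < r$.

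Next I would restrict to the real interval $[0,r)$. There $a_n \ge 0$ forces $f(t) \ge a_0 = 1 > 0$, so the identity $f(t)\bigl(1 - g(t)\bigr) = 1$ rearranges to $g(t) = 1 - 1/f(t) < 1$; together with $g(t) \ge 0$ this pins $g$ to $[0,1)$ on $[0,r)$. The non-vanishing of $f$ on the complex disc then follows by a modulus estimate: for $|\lambda| < r$ the non-negativity of the $b_n$ gives $|g(\lambda)| \le \sum_{n\ge1} b_n |\lambda|^n = g(|\lambda|) < 1$, so $1 - g(\lambda) \ne 0$; feeding this into $f(\lambda)\bigl(1 - g(\lambda)\bigr) = 1$ shows $f(\lambda) \ne 0$.

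Finally I would assemble the pieces: for $\bm z, \bm w \in \sqrt r\,\mathbb B_d$ the argument $\lambda = \langle \bm z, \bm w\rangle$ satisfies $|\lambda| < r$ by Cauchy--Schwarz, whence $s(\bm z, \bm w) = f(\lambda) \ne 0$. The only genuinely delicate point is justifying that $g$ converges on the entire disc of radius $r$ (rather than on some smaller disc) and that the product identity persists there; the comparison $b_n \le a_n$ dispatches this cleanly, after which the Cauchy product, the triangle inequality, and Cauchy--Schwarz are all routine.
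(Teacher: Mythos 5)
Your proof is correct. The paper itself gives no argument here --- it only remarks that the proof is ``similar to the proof of [\cite{H}, Lemma 2.3]'' --- and what you have written is essentially that standard argument, reconstructed self-containedly: the comparison $b_n \le a_n$ (from \eqref{coeff rel} with $b_k \ge 0$ and $a_0=1$) pushes the radius of convergence of $g$ up to $r$, the Cauchy product upgrades the formal identity $f(1-g)=1$ to an analytic one on $|\lambda|<r$, and Cauchy--Schwarz reduces the several-variable statement to this disc. One small remark: once you have $f(\lambda)\bigl(1-g(\lambda)\bigr)=1$ for $|\lambda|<r$, the non-vanishing of $f$ is immediate (a product equal to $1$ has no zero factor), so the intermediate estimate $|g(\lambda)|\le g(|\lambda|)<1$ is not actually needed for the conclusion --- though it is a harmless and informative byproduct, and it is the step that would matter if one instead wanted to realize $f$ as $1/(1-g)$.
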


	\begin{corollary}\label{roc}
		Let $s$ be a unitarily invariant CNP kernel defined in \eqref{sdef}. If the power series $\sum_{n=0}^{\infty} a_{n}t^{n}$ has radius of convergence $r \geq 1,$ then the power series $\sum_{n=1}^{\infty} b_{n}t^{n}$ has radius of convergence greater than or equal to $r.$
	\end{corollary}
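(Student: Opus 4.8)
The plan is to collapse the problem to a single complex variable. Writing $t=\langle\bm z,\bm w\rangle$, set $f(t)=\sum_{n=0}^{\infty}a_{n}t^{n}$ and $g(t)=\sum_{n=1}^{\infty}b_{n}t^{n}$. By the defining relation \eqref{bn}, the identity $g(t)=1-\tfrac{1}{f(t)}$ holds for $t$ in a neighbourhood of $0$. Since $\sum a_{n}t^{n}$ has radius of convergence $r$, the function $f$ is holomorphic on the disc $\{t\in\mathbb{C}:|t|<r\}$, and the quantity $1-\tfrac{1}{f}$ will be holomorphic on that whole disc precisely when $f$ has no zero there. So the entire content of the corollary reduces to showing that $f(t)\neq 0$ for every complex $t$ with $|t|<r$.

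To establish this nonvanishing I would invoke the preceding lemma, which guarantees $s(\bm z,\bm w)\neq 0$ for all $\bm z,\bm w\in\sqrt{r}\,\mathbb{B}_{d}$. The key observation is that every complex number $t$ with $|t|<r$ is realized as an inner product $\langle\bm z,\bm w\rangle$ with both vectors lying in $\sqrt{r}\,\mathbb{B}_{d}$. Indeed, writing $t=|t|e^{i\theta}$, take $\bm z=(\sqrt{|t|}\,e^{i\theta},0,\ldots,0)$ and $\bm w=(\sqrt{|t|},0,\ldots,0)$; then $\langle\bm z,\bm w\rangle=\sqrt{|t|}\,e^{i\theta}\cdot\sqrt{|t|}=t$, while $\|\bm z\|=\|\bm w\|=\sqrt{|t|}<\sqrt{r}$. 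Because $|t|<r$, the series defining $f(t)$ converges and $f(t)=s(\bm z,\bm w)\neq 0$. The case $t=0$ is immediate from $f(0)=a_{0}=1$. Hence $f$ is zero-free on $\{|t|<r\}$.

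It then remains only to read off the conclusion. The function $h:=1-\tfrac{1}{f}$ is now holomorphic on the entire disc $\{|t|<r\}$, and by \eqref{bn} its Taylor expansion at $0$ coincides near $0$ with $\sum_{n=1}^{\infty}b_{n}t^{n}$; by uniqueness of Taylor coefficients, the $b_{n}$ are exactly the Taylor coefficients of $h$ at $0$. Since $h$ is holomorphic throughout $\{|t|<r\}$, its Taylor series at $0$ converges on that disc, so $\sum_{n=1}^{\infty}b_{n}t^{n}$ has radius of convergence at least $r$. The one genuinely substantive step is the middle one: transferring the multivariable nonvanishing supplied by the lemma to nonvanishing of the single-variable function $f$ on a complex disc, for which the surjectivity of $(\bm z,\bm w)\mapsto\langle\bm z,\bm w\rangle$ onto $\{|t|<r\}$ with both arguments constrained to $\sqrt{r}\,\mathbb{B}_{d}$ is the crucial device; everything else is routine complex analysis.
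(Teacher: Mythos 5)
Your proof is correct and follows exactly the route the paper intends: the corollary is stated without proof as an immediate consequence of the preceding lemma, and your argument (reduce to the single variable $t=\langle\bm z,\bm w\rangle$, use the lemma to get $f\neq 0$ on $\{|t|<r\}$ via the surjectivity of the inner product onto that disc, then apply holomorphy of $1-1/f$ and uniqueness of Taylor coefficients) is precisely the reasoning being left implicit. No gaps.
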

	
	We denote by $\sigma(\bfT) \subset \mathbb{C}^{d}$ the Taylor spectrum of a commuting $d-$tuple of bounded operators $\bfT = (T_{1}, \dots, T_{d})$ on a Hilbert space. For $r=1,$ the following lemma can be found in [\cite{CH}, Lemma 5.3]. For any $r \geq 1,$ it can be proved using the same techniques.
	
	\begin{lemma}
		Let $s$ be a unitarily invariant CNP kernel defined in \eqref{sdef} and the power series $\sum_{n=0}^{\infty} a_{n}t^{n}$ has radius of convergence $r \geq 1.$ If $\bfT = (T_{1}, \dots, T_{d})$ is a $1/s$-contraction, then $\sigma(\bfT) \subseteq \sqrt{r}\overline{  \mathbb{B}_{d}}.$
	\end{lemma}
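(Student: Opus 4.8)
The plan is to reduce the multivariable spectral inclusion to a one–variable spectral–radius estimate, and then to extract that estimate from an approximate–eigenvector argument combined with Fatou's lemma. First I would invoke the spectral mapping theorem for the Taylor spectrum: if $\bm\lambda\in\sigma(\bfT)$ and $\bm u\in\mathbb{C}^{d}$ is a unit vector, then applying the polynomial $p(\bm z)=\langle\bm z,\bm u\rangle$ gives $\langle\bm\lambda,\bm u\rangle\in\sigma(p(\bfT))$, where $p(\bfT)=\sum_{i}\overline{u_i}T_i=:L$. Choosing $\bm u=\bm\lambda/\|\bm\lambda\|$ (the case $\bm\lambda=0$ being trivial since $r\ge 1$) yields $\|\bm\lambda\|\in\sigma(L)$, hence $\|\bm\lambda\|\le\rho(L)$. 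So it suffices to prove $\rho(L)\le\sqrt r$ for every operator of the form $L=\sum_{i}\overline{u_i}T_i$ with $\bm u$ a unit vector.

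The next step is to observe that such an $L$ is itself a one–variable $1/s$–contraction, i.e. $\sum_{n\ge 1}b_nL^{n}(L^{*})^{n}\le I$. Writing $(L^{*})^{n}=\sum_{|\alpha|=n}\binom{n}{\alpha}\bm u^{\alpha}(\bfT^{\alpha})^{*}$ and using $\sum_{|\alpha|=n}\binom{n}{\alpha}|\bm u^{\alpha}|^{2}=\|\bm u\|^{2n}=1$, the Cauchy–Schwarz inequality gives $\|(L^{*})^{n}h\|^{2}\le\sum_{|\alpha|=n}\binom{n}{\alpha}\|(\bfT^{\alpha})^{*}h\|^{2}$ for every $h$. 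Multiplying by $b_n$ and summing, the relation $b_{\alpha}=b_{|\alpha|}\binom{|\alpha|}{\alpha}$ from \eqref{a_alpha} turns this into $\sum_{n\ge 1}b_n\|(L^{*})^{n}h\|^{2}\le\sum_{\alpha\neq 0}b_{\alpha}\|(\bfT^{\alpha})^{*}h\|^{2}\le\|h\|^{2}$, the final inequality being exactly the $1/s$–contractivity of $\bfT$. This is precisely the one–variable inequality I want.

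To bound $\rho(L)$ I would pick $\mu\in\sigma(L^{*})$ of maximal modulus, so that $|\mu|=\rho(L^{*})=\rho(L)$; being a point of maximal modulus it lies on $\partial\sigma(L^{*})$ and is therefore an approximate eigenvalue, giving unit vectors $g_k$ with $(L^{*}-\mu)g_k\to 0$. For each fixed $n$ one has $(L^{*})^{n}g_k-\mu^{n}g_k\to 0$ as $k\to\infty$, so $\|(L^{*})^{n}g_k\|\to|\mu|^{n}$; applying Fatou's lemma to the uniform bound $\sum_{n\ge 1}b_n\|(L^{*})^{n}g_k\|^{2}\le 1$ then yields $\sum_{n\ge 1}b_n|\mu|^{2n}\le 1$, that is $g(|\mu|^{2})\le 1$ where $g(t)=\sum_{n\ge 1}b_nt^{n}$. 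Finally I would invoke the elementary fact that $g(t)\le 1$ forces $t\le r$: since the $a_n$ are nonnegative, Pringsheim's theorem places a singularity of $A(t)=\sum_{n\ge 0}a_nt^{n}=1/(1-g(t))$ at $t=r$, while $g(t)<1$ on $[0,r)$ because $A$ is finite and positive there; combined with $b_n\ge 0$ this gives $|\mu|^{2}\le r$, whence $\rho(L)\le\sqrt r$ and therefore $\|\bm\lambda\|\le\sqrt r$, as desired.

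The main obstacle I anticipate is the careful execution of this last step. The passage to the limit under Fatou is clean precisely because the bound $\le 1$ holds uniformly along the approximating sequence, but the implication $g(t)\le 1\Rightarrow t\le r$ must be argued uniformly across the two possible behaviours of $A$ at its radius of convergence: a pole, where $g(r^{-})=1$ and hence $g(t)>1$ for $t\in(r,\rho_b)$, versus a branch–type singularity, where $g(r^{-})<1$ but the radius of convergence of $g$ coincides with $r$ so that $\sum_n b_n t^{n}$ diverges for $t>r$. In both cases $|\mu|^{2}>r$ contradicts $g(|\mu|^{2})\le 1$; this is the point at which the standing hypothesis $r\ge 1$ and the comparison of radii in Corollary \ref{roc} enter, and it parallels the $r=1$ computation of [\cite{CH}, Lemma 5.3].
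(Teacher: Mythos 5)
Your argument is correct, and it should be said up front that the paper does not actually prove this lemma: it cites [\cite{CH}, Lemma 5.3] for $r=1$ and asserts that the general case ``can be proved using the same techniques.'' What you have written is a faithful, self-contained execution of exactly those techniques: the reduction to the single operator $L=\sum_i\overline{u_i}T_i$ via the spectral mapping theorem for the Taylor spectrum, the transfer of the $1/s$-inequality from $\bfT$ to $L$ using the multinomial identity $b_{\alpha}=b_{|\alpha|}\binom{|\alpha|}{\alpha}$ of \eqref{a_alpha} together with $b_n\geq 0$ (Lemma \ref{CNPK}) and Cauchy--Schwarz, and the approximate-eigenvector/Fatou step yielding $\sum_{n\geq 1}b_n\rho(L)^{2n}\leq 1$; all of these steps check out. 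Only the last step deserves tightening, and the clean dichotomy is on the radius of convergence $\rho_b$ of $g(t)=\sum_{n\geq 1}b_nt^{n}$ (which satisfies $\rho_b\geq r$ by Corollary \ref{roc}) rather than on ``pole versus branch singularity'': if $\rho_b=r$, nonnegativity of the $b_n$ forces $g(t)=+\infty$ for every $t>r$; if $\rho_b>r$, then Pringsheim applied to $A(t)=\sum a_nt^{n}$ forces $g(r)=1$ (otherwise $1/(1-g)$ would continue $A$ analytically past the singular point $t=r$), and strict monotonicity of $g$ on $[0,\rho_b)$ --- which uses $b_1=a_1>0$ --- gives $g(t)>1$ for $t\in(r,\rho_b)$; in either case $g(|\mu|^{2})\leq 1$ rules out $|\mu|^{2}>r$. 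You should also dispose of the trivial case $r=\infty$ before invoking Pringsheim. With those two sentences added, this is a complete proof of the statement the paper leaves to the reader.
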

	
	\subsection{Functional calculus}
	Note that for fixed $\bm{w} \in\mathbb{B}_{d},$ the series $\sum_{\alpha\in\mathbb{Z}^{d}_{+}} a_{\alpha} \overline{\bm{w}^{\alpha}} \bm{z}^{\alpha} $ defines an analytic function in a ball centered at origin of radius $\sqrt{r} / \| \bm{w}\| > \sqrt{r} .$ By Corollary \ref{roc}, the series $\sum_{\alpha\in\mathbb{Z}^{d}_{+} \backslash \{0\}} b_{\alpha} \overline{\bm{w}^{\alpha}} \bm{z}^{\alpha}$ also defines an analytic function in $(\sqrt{r} / \|\bm{w}\|) \mathbb{B}_{d}.$ Now by [\cite{Vas}, Theorem III.9.9], we get that the two operator series $\sum_{\alpha\in\mathbb{Z}^{d}_{+}} a_{\alpha} \overline{\bm{w}^{\alpha}} \bfT^{\alpha}$ and $\sum_{\alpha\in\mathbb{Z}^{d}_{+} \backslash \{0\}} b_{\alpha} \overline{\bm{w}^{\alpha}} \bfT^{\alpha}$ converge in norm operator topology. By virtue of \eqref{bn}, we get $$ \left(\sum\limits_{\alpha\in\mathbb{Z}^{d}_{+}} a_{\alpha} \overline{\bm{w}^{\alpha}} \bfT^{\alpha} \right) \left( I - \sum\limits_{\alpha\in\mathbb{Z}^{d}_{+} \backslash \{0\}} b_{\alpha} \overline{\bm{w}^{\alpha}} \bfT^{\alpha} \right) =  I .$$
	For $\bm{w} \in\mathbb{B}_{d},$ we set the notation: $$s_{\bm{w}}(\bfT) = \sum\limits_{\alpha \in \mathbb{Z}^{d}_{+}} a_{\alpha} \overline{\bm{w}^{\alpha}} \bfT^{\alpha}.$$
	Since the map $A \mapsto A^{*}$ is continuous in norm operator topology, we get $$(s_{\bm{z}} (\bfT))^{*} = \sum\limits_{\alpha \in \mathbb{Z}^{d}_{+}} a_{\alpha} \bm{z}^{\alpha} (\bfT^{\alpha})^{*}$$
	and $$ \left(\sum\limits_{\alpha\in\mathbb{Z}^{d}_{+}} a_{\alpha} \bm{z}^{\alpha} (\bfT^{\alpha})^{*} \right) \left( I - \sum\limits_{\alpha\in\mathbb{Z}^{d}_{+} \backslash \{0\}} b_{\alpha} \bm{z}^{\alpha} (\bfT^{\alpha})^{*} \right) =  I $$ for all $\bm{z} \in \mathbb{B}_{d}.$
	This implies
	\begin{equation} \label{inv}
		\left( I - \sum\limits_{\alpha\in\mathbb{Z}^{d}_{+} \backslash \{0\}} b_{\alpha} \bm{z}^{\alpha} (\bfT^{\alpha})^{*} \right)^{-1} = \left(\sum\limits_{\alpha\in\mathbb{Z}^{d}_{+}} a_{\alpha} \bm{z}^{\alpha} (\bfT^{\alpha})^{*} \right) = (s_{\bm{z}} (\bfT))^{*}
	\end{equation}
	
	\subsection{Construction of the characteristic function}
	Let $\bfT = (T_{1}, \dots, T_{d})$ be a $1/s-$contraction acting on a Hilbert space $\cH.$ We denote by $$\tilde{\cH} \bydef \oplus_{\alpha\in\mathbb{Z}^{d}_{+} \backslash \{0\}} \cH,$$ the infinite direct sum of the Hilbert space $\cH.$  For each multi-index $\alpha\in\mathbb{Z}^{d}_{+} \backslash \{0\},$ consider the polynomial $\psi_{\alpha} : \mathbb{B}_{d} \to \mathbb{C}$ given by
	$$\psi_{\alpha}(\bm{z}) = (b_{\alpha})^{1/2} \bm{z}^{\alpha}$$
	and define the infinite operator tuple
	\begin{equation*}\label{def Z}
		\bfZ = (\psi_{\alpha}(z) I_{\cH})_{\alpha\in\mathbb{Z}^{d}_{+} \backslash \{0\}}.
	\end{equation*}
	We shall also denote by $\bfZ$ the operator from $\tilde{\cH}$ to $\cH$ which maps $(h_{\alpha})_{\alpha\in\mathbb{Z}^{d}_{+} \backslash \{0\}}$ to $\sum\limits_{\alpha\in\mathbb{Z}^{d}_{+} \backslash \{0\}} (b_{\alpha})^{1/2} \bm{z}^{\alpha} h_{\alpha}.$ The operator $\bfZ$ is a strict contraction because
	$$\|\bfZ\|^{2} = \sum\limits_{\alpha\in\mathbb{Z}^{d}_{+} \backslash \{0\}} b_{\alpha} | \bm{z}^{\alpha}|^{2} = 1 - \frac{1}{s(\bm{z}, \bm{z})} < 1.$$
	Define by $\tilde{\bfT}$ the infinite operator tuple
	\begin{equation*}\label{def Ttilde}
		\tilde{\bfT}  =  (\psi_{\alpha}(\bfT))_{\alpha\in\mathbb{Z}^{d}_{+} \backslash \{0\}}.
	\end{equation*}
	as well as the operator from $\tilde{\cH}$ to $\cH$ which maps $(h_{\alpha})_{\alpha\in\mathbb{Z}^{d}_{+} \backslash \{0\}}$ to $\sum\limits_{\alpha\in\mathbb{Z}^{d}_{+} \backslash \{0\}} (b_{\alpha})^{1/2} \bfT^{\alpha} h_{\alpha}.$ It is easy to check that $\tilde{\bfT}$ is a contraction if and only if $\bfT$ is a $1/s$-contraction.  Also, $$\tilde{\bfT} \tilde{\bfT}^{*} = \sum\limits_{\alpha\in\mathbb{Z}^{d}_{+} \backslash \{0\}} b_{\alpha} \bfT^{\alpha} (\bfT^{\alpha})^{*} =  I_{\cH} - \Delta_{\bfT}^{2}$$
	and hence $\Delta_{\bfT}^{2} = I_{\cH} - \tilde{\bfT} \tilde{\bfT}^{*}.$ Let $D_{\tilde{\bfT}}$ be the unique positive square root of the positive operator $I_{\tilde{H}} - \tilde{\bfT}^{*} \tilde{\bfT},$ and let $\cD_{\tilde{\bfT}} = \overline{\rm Ran} D_{\tilde{\bfT}}.$ By equation (I.3.4) of \cite{NF} we obtain the identity
	\begin{equation}\label{defect}
		\tilde{\bfT}D_{\tilde{\bfT}}= \Delta_{\bfT} \tilde{\bfT}.
	\end{equation} Note that the operator $\bfZ \tilde{\bfT}^{*}$ is a strict contraction. So, $I_{\cH}-\bfZ \tilde{\bfT}^{*}$ is invertible.
	
	\begin{definition}
		The characteristic function of a $1/s$-contraction $\bfT = (T_{1}, \dots, T_{d})$ is the analytic operator valued function $\theta_{\bfT}:\mathbb{B}_{d}\to\cB(\cD_{\tilde{\bfT}}, \overline{\rm Ran} \Delta_{\bfT})$  given by
		\begin{align}
			\theta_{\bfT}(\bm{z})=(-\tilde{\bfT}+ \Delta_{\bfT}(I_{\cH}-\bfZ \tilde{\bfT}^{*})^{-1}\bfZ D_{\tilde{\bfT}})|_{\cD_{\tilde{\bfT}}}. \label{chfn}
		\end{align}
	\end{definition}
	
	The characteristic function $\theta_{\bfT}$ takes values in $\cB(\cD_{\tilde{\bfT}}, \overline{\rm Ran} \Delta_{\bfT})$ by virtue of \eqref{defect}. Since
	$$ I - \bfZ \tilde{\bfT}^{*} = I - \sum\limits_{\alpha\in\mathbb{Z}^{d}_{+} \backslash \{0\}} b_{\alpha} \bm{z}^{\alpha} (\bfT^{\alpha})^{*},$$
	we get by \eqref{inv} that
	\begin{equation*} \label{inv2}
		( I - \bfZ \tilde{\bfT}^{*})^{-1} = (s_{\bm{z}} (\bfT))^{*}.
	\end{equation*}
	
	\begin{lemma} \label{I1}
		The identity $$I-\theta_{\bfT}(\bm{z})\theta_{\bfT}(\bm{w})^{*}= \frac{1}{s(\bm{z},\bm{w})} \Delta_{\bfT} (s_{\bm{z}} (\bfT))^{*} s_{\bm{w}} (\bfT) \Delta_{\bfT}$$ holds for any $\bm{z},\bm{w}\in\mathbb{B}_{d}.$
	\end{lemma}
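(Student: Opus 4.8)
The plan is to adapt the classical Sz.-Nagy–Foias computation, reading \eqref{chfn} as a characteristic function in which $\tilde{\bfT}$ plays the role of the contraction, $\Delta_{\bfT}=(I_{\cH}-\tilde{\bfT}\tilde{\bfT}^{*})^{1/2}$ is the defect of its adjoint, $D_{\tilde{\bfT}}$ is its defect, and the strict contraction $\bfZ$ stands in for the scalar variable. Write $\bfZ_{\bm z}$ and $\bfZ_{\bm w}$ for the operator $\bfZ$ assembled from the monomials $\bm z^{\alpha}$ and $\bm w^{\alpha}$, and set $P:=(I_{\cH}-\bfZ_{\bm z}\tilde{\bfT}^{*})^{-1}$ and $Q:=(I_{\cH}-\tilde{\bfT}\bfZ_{\bm w}^{*})^{-1}$. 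By \eqref{inv} these are exactly $P=(s_{\bm z}(\bfT))^{*}$ and $Q=s_{\bm w}(\bfT)$, so the right-hand side of the asserted identity is $\tfrac{1}{s(\bm z,\bm w)}\Delta_{\bfT}PQ\Delta_{\bfT}$. Three algebraic facts will drive the proof: the defect relation \eqref{defect} together with its adjoint $D_{\tilde{\bfT}}\tilde{\bfT}^{*}=\tilde{\bfT}^{*}\Delta_{\bfT}$; the resolvent rewritings $\tilde{\bfT}\bfZ_{\bm w}^{*}=I-Q^{-1}$ and $\bfZ_{\bm z}\tilde{\bfT}^{*}=I-P^{-1}$; and the scalar computation $\bfZ_{\bm z}\bfZ_{\bm w}^{*}=\bigl(1-1/s(\bm z,\bm w)\bigr)I_{\cH}$, which is immediate from \eqref{bn} and the definition of $\bfZ$.

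First I would multiply out $\theta_{\bfT}(\bm z)\theta_{\bfT}(\bm w)^{*}$, using $\theta_{\bfT}(\bm z)=-\tilde{\bfT}+\Delta_{\bfT}P\bfZ_{\bm z}D_{\tilde{\bfT}}$ and its adjoint $\theta_{\bfT}(\bm w)^{*}=-\tilde{\bfT}^{*}+D_{\tilde{\bfT}}\bfZ_{\bm w}^{*}Q\Delta_{\bfT}$, which yields four terms. The three terms other than $\tilde{\bfT}\tilde{\bfT}^{*}$ are massaged with the defect relations via $\tilde{\bfT}D_{\tilde{\bfT}}\bfZ_{\bm w}^{*}=\Delta_{\bfT}\tilde{\bfT}\bfZ_{\bm w}^{*}$, $\bfZ_{\bm z}D_{\tilde{\bfT}}\tilde{\bfT}^{*}=\bfZ_{\bm z}\tilde{\bfT}^{*}\Delta_{\bfT}$, and $D_{\tilde{\bfT}}^{2}=I-\tilde{\bfT}^{*}\tilde{\bfT}$, which exposes a factor $\Delta_{\bfT}$ on the far left and far right of each. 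Since $\tilde{\bfT}\tilde{\bfT}^{*}=I_{\cH}-\Delta_{\bfT}^{2}$, this lets me write $I_{\cH}-\theta_{\bfT}(\bm z)\theta_{\bfT}(\bm w)^{*}=\Delta_{\bfT}B\Delta_{\bfT}$, where
$$B=I+\tilde{\bfT}\bfZ_{\bm w}^{*}Q+P\bfZ_{\bm z}\tilde{\bfT}^{*}-P\bigl(\bfZ_{\bm z}\bfZ_{\bm w}^{*}-\bfZ_{\bm z}\tilde{\bfT}^{*}\tilde{\bfT}\bfZ_{\bm w}^{*}\bigr)Q.$$

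The heart of the matter is to collapse $B$ to $\tfrac{1}{s(\bm z,\bm w)}PQ$. Using $\tilde{\bfT}\bfZ_{\bm w}^{*}=I-Q^{-1}$ gives $I+\tilde{\bfT}\bfZ_{\bm w}^{*}Q=Q$; using $\bfZ_{\bm z}\tilde{\bfT}^{*}=I-P^{-1}$ gives $P\bfZ_{\bm z}\tilde{\bfT}^{*}=P-I$ and $P\bfZ_{\bm z}\tilde{\bfT}^{*}\tilde{\bfT}\bfZ_{\bm w}^{*}Q=(P-I)(Q-I)$; and $\bfZ_{\bm z}\bfZ_{\bm w}^{*}=\bigl(1-1/s(\bm z,\bm w)\bigr)I_{\cH}$ turns $P\bfZ_{\bm z}\bfZ_{\bm w}^{*}Q$ into $\bigl(1-1/s(\bm z,\bm w)\bigr)PQ$. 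Substituting and cancelling the terms $P$, $Q$, $I$ and $PQ$ against each other leaves precisely $B=\tfrac{1}{s(\bm z,\bm w)}PQ$, and renaming $P=(s_{\bm z}(\bfT))^{*}$, $Q=s_{\bm w}(\bfT)$ via \eqref{inv} gives the stated formula. This portion is routine once the three facts above are in place.

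The one genuinely delicate point, and the step I expect to demand the most care, is the restriction to $\cD_{\tilde{\bfT}}$ built into \eqref{chfn}: a priori $\theta_{\bfT}(\bm z)\theta_{\bfT}(\bm w)^{*}$ differs from the unrestricted product by the projection onto $\cD_{\tilde{\bfT}}$ inserted in the middle, so the computation above must be transferred to the right space. I would dispose of this by observing that the asserted identity is an equality of operators on $\overline{\rm Ran}\,\Delta_{\bfT}$, and that the restriction is automatic there. For $y\in\overline{\rm Ran}\,\Delta_{\bfT}$ the adjoint defect relation $D_{\tilde{\bfT}}\tilde{\bfT}^{*}=\tilde{\bfT}^{*}\Delta_{\bfT}$ shows $\tilde{\bfT}^{*}y\in\overline{\rm Ran}\,D_{\tilde{\bfT}}=\cD_{\tilde{\bfT}}$, whence $\theta_{\bfT}(\bm w)^{*}y$ already lies in $\cD_{\tilde{\bfT}}$ and the inserted projection acts as the identity; dually, $\tilde{\bfT}D_{\tilde{\bfT}}=\Delta_{\bfT}\tilde{\bfT}$ forces the range of $\theta_{\bfT}(\bm z)$ into $\overline{\rm Ran}\,\Delta_{\bfT}$. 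Thus on $\overline{\rm Ran}\,\Delta_{\bfT}$ the restricted product equals the unrestricted one, and since $\tfrac{1}{s(\bm z,\bm w)}\Delta_{\bfT}PQ\Delta_{\bfT}$ annihilates $\ker\Delta_{\bfT}=(\overline{\rm Ran}\,\Delta_{\bfT})^{\perp}$, the operator identity derived above transfers verbatim, completing the proof.
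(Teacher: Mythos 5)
Your computation is correct and is precisely the ``straightforward computation'' that the paper omits: with $P=(s_{\bm{z}}(\bfT))^{*}$, $Q=s_{\bm{w}}(\bfT)$, the defect intertwinings $\tilde{\bfT}D_{\tilde{\bfT}}=\Delta_{\bfT}\tilde{\bfT}$, $D_{\tilde{\bfT}}\tilde{\bfT}^{*}=\tilde{\bfT}^{*}\Delta_{\bfT}$, and $\bfZ_{\bm{z}}\bfZ_{\bm{w}}^{*}=\bigl(1-1/s(\bm{z},\bm{w})\bigr)I_{\cH}$, the middle factor $B$ does collapse to $\tfrac{1}{s(\bm{z},\bm{w})}PQ$ exactly as you describe. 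Your handling of the restriction to $\cD_{\tilde{\bfT}}$ (showing the inserted projection acts as the identity on $\tilde{\bfT}^{*}(\overline{\rm Ran}\,\Delta_{\bfT})$ via the adjoint defect relation) correctly disposes of the only delicate point, so the argument is complete.
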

	
	We omit the proof because it is a straightforward computation. In what follows, $V_{\bfT}$ is as in Theorem \ref{V_T}.
	
	\begin{lemma}\label{V_T*}
		The identity $$V_{\bfT}^{*}(s_{\bm{w}}\otimes\xi)= s_{\bm{w}} (\bfT) \Delta_{\bfT} \xi$$ holds for any $\bm{w}\in\mathbb{B}_{d}$ and $\xi \in \overline{\rm Ran} \Delta_{\bfT}.$
	\end{lemma}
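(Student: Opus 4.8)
The plan is to verify the identity by pairing both sides with an arbitrary vector $h \in \cH$ and reading off the adjoint from the explicit formula for $V_{\bfT}$. First I would write
\[
\la V_{\bfT}^{*}(s_{\bm{w}} \otimes \xi),\, h \ra_{\cH} = \la s_{\bm{w}} \otimes \xi,\, V_{\bfT} h \ra_{H_{s} \otimes \overline{\rm Ran} \Delta_{\bfT}},
\]
and substitute $V_{\bfT} h = \sum_{\alpha \in \mathbb{Z}^{d}_{+}} a_{\alpha}\, \bm{z}^{\alpha} \otimes \Delta_{\bfT}(\bfT^{\alpha})^{*} h$ from Theorem \ref{V_T} (or Corollary \ref{V_Tagain} in the non-pure case). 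Factoring the tensor inner product, the right-hand side becomes $\sum_{\alpha} a_{\alpha}\, \la s_{\bm{w}}, \bm{z}^{\alpha} \ra_{H_{s}}\, \la \xi, \Delta_{\bfT}(\bfT^{\alpha})^{*} h \ra_{\cH}$.

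The computation then rests on two elementary facts. Using the orthogonality of the monomials and $\|\bm{z}^{\alpha}\|^{2}_{H_{s}} = 1/a_{\alpha}$, the reproducing property yields $\la s_{\bm{w}}, \bm{z}^{\alpha} \ra_{H_{s}} = \overline{\bm{w}^{\alpha}}$. Next, moving the operator across the inner product and using that $\Delta_{\bfT}$ is self-adjoint gives $\la \xi, \Delta_{\bfT}(\bfT^{\alpha})^{*} h \ra_{\cH} = \la \bfT^{\alpha} \Delta_{\bfT} \xi,\, h \ra_{\cH}$. Combining these, the sum collapses to
\[
\left\la \left( \sum_{\alpha \in \mathbb{Z}^{d}_{+}} a_{\alpha}\, \overline{\bm{w}^{\alpha}}\, \bfT^{\alpha} \right) \Delta_{\bfT} \xi,\, h \right\ra_{\cH} = \la s_{\bm{w}}(\bfT) \Delta_{\bfT} \xi,\, h \ra_{\cH},
\]
where the last equality is the definition of $s_{\bm{w}}(\bfT)$ from the functional calculus subsection. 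Since $h$ is arbitrary, the identity follows.

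The only point requiring care, and the natural place for the main obstacle, is the interchange of the infinite summation with the inner products together with the recognition of the resulting operator series. This is harmless here: for $\bm{w} \in \mathbb{B}_{d}$ the series $\sum_{\alpha} a_{\alpha} \overline{\bm{w}^{\alpha}} \bfT^{\alpha}$ converges in the operator norm, while $V_{\bfT} h$ converges in $H_{s} \otimes \overline{\rm Ran}\Delta_{\bfT}$, so continuity of the inner product justifies every rearrangement.

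A slicker alternative I would keep in reserve is to factor the kernel function as $s_{\bm{w}} = s_{\bm{w}}(\bfM_{\bm{z}})\,\bm{1}$, where $\bm{1}$ is the constant function, so that $s_{\bm{w}} \otimes \xi = (s_{\bm{w}}(\bfM_{\bm{z}}) \otimes I)(\bm{1} \otimes \xi)$. Extending the intertwining relation $V_{\bfT}^{*}(p(\bfM_{\bm{z}}) \otimes I) = p(\bfT) V_{\bfT}^{*}$ from polynomials to the norm-convergent series then gives $V_{\bfT}^{*}(s_{\bm{w}} \otimes \xi) = s_{\bm{w}}(\bfT)\, V_{\bfT}^{*}(\bm{1} \otimes \xi)$, and a one-line computation (only the $\alpha = 0$ term survives, with $a_{0} = 1$) shows $V_{\bfT}^{*}(\bm{1} \otimes \xi) = \Delta_{\bfT} \xi$, reproducing the claim.
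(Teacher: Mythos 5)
Your proposal is correct and follows essentially the same route as the paper: pair $V_{\bfT}^{*}(s_{\bm{w}}\otimes\xi)$ with an arbitrary $h\in\cH$, expand $V_{\bfT}h$ from its defining series, use $\la s_{\bm{w}},\bm{z}^{\alpha}\ra_{H_{s}}=\overline{\bm{w}^{\alpha}}$ and self-adjointness of $\Delta_{\bfT}$, and recognize the resulting series as $s_{\bm{w}}(\bfT)\Delta_{\bfT}\xi$. The convergence remark and the alternative argument via the intertwining relation are sound additions but not needed.
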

	\begin{proof}
		Let $h$ be any element of $\cH$. Then
		\begin{align*}
			\la V_{\bfT}^{*}(s_{\bm{w}}\otimes\xi),h\ra &= \la s_{\bm{w}}\otimes \xi, V_{\bfT}h\ra \\
			& = \la s_{\bm{w}} \otimes \xi , \sum\limits_{\alpha\in\mathbb{Z}^{d}_{+}} a_{\alpha} \bm{z}^{\alpha} \otimes \Delta_{\bfT}(\bfT^{\alpha})^{*}h \ra\\
			& = \sum\limits_{\alpha\in\mathbb{Z}^{d}_{+}} a_{\alpha} \overline{\bm{w}^{\alpha}}\la \xi , \Delta_{\bfT}(\bfT^{\alpha})^{*} h \ra \\
			& = \sum\limits_{\alpha\in\mathbb{Z}^{d}_{+}} a_{\alpha} \overline{\bm{w}^{\alpha}}\la \bfT^{\alpha} \Delta_{\bfT}\xi,h\ra \\
			& = \la \sum\limits_{\alpha\in\mathbb{Z}^{d}_{+}} a_{\alpha} \overline{\bm{w}^{\alpha}} \bfT^{\alpha}\Delta_{T}\xi,h\ra = \la s_{\bm{w}} (\bfT)\Delta_{\bfT}\xi , h \ra.
		\end{align*}
		Since $h$ is arbitrary, we have the desired identity.
	\end{proof}
	
	Using Lemma \ref{I1} and Lemma \ref{V_T*}, it is straightforward to see the following.
	\begin{corollary} \label{cor1}
		The identity $$ \left\la V_{\bfT}^{*} (s_{\bm{w}} \otimes \xi), V_{\bfT}^{*}(s_{\bm{z}} \otimes \eta) \right\ra = s(\bm{z},\bm{w}) \left\la (I- \theta_{\bfT}(\bm{z}) \theta_{\bfT}(\bm{w})^{*})\xi, \eta \right\ra $$ holds for any $\bm{z},\bm{w}\in\mathbb{B}_{d}$ and $\xi,\eta \in \overline{\rm Ran} \Delta_{\bfT}.$
	\end{corollary}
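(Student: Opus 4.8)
The plan is to chain together the two preceding lemmas; no genuine machinery beyond them is needed. First I would rewrite each of the two vectors appearing on the left-hand side using Lemma \ref{V_T*}. It gives $V_{\bfT}^{*}(s_{\bm{w}} \otimes \xi) = s_{\bm{w}}(\bfT) \Delta_{\bfT} \xi$ and, with $\bm{z}, \eta$ in place of $\bm{w}, \xi$, likewise $V_{\bfT}^{*}(s_{\bm{z}} \otimes \eta) = s_{\bm{z}}(\bfT) \Delta_{\bfT} \eta$. Hence the inner product on the left, originally taken in $H_{s} \otimes \overline{\rm Ran}\,\Delta_{\bfT}$, becomes the inner product $\la s_{\bm{w}}(\bfT) \Delta_{\bfT} \xi, s_{\bm{z}}(\bfT) \Delta_{\bfT} \eta \ra$ taken in $\cH$.

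Next I would transfer the operator $s_{\bm{z}}(\bfT)$ to the other side of the inner product as its adjoint and use that $\Delta_{\bfT}$ is self-adjoint, being the positive square root of a positive operator. This rewrites the expression as $\la \Delta_{\bfT} (s_{\bm{z}}(\bfT))^{*} s_{\bm{w}}(\bfT) \Delta_{\bfT} \xi, \eta \ra$, so that the operator sandwiched between $\xi$ and $\eta$ is exactly the one appearing on the right-hand side of Lemma \ref{I1}.

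Finally I would invoke Lemma \ref{I1}, which identifies $\Delta_{\bfT} (s_{\bm{z}}(\bfT))^{*} s_{\bm{w}}(\bfT) \Delta_{\bfT}$ with $s(\bm{z},\bm{w})\,(I - \theta_{\bfT}(\bm{z}) \theta_{\bfT}(\bm{w})^{*})$, and then pull the scalar $s(\bm{z},\bm{w})$ out of the inner product to land precisely on the right-hand side of the asserted identity. I do not expect any real obstacle: the whole argument is a two-step substitution. The only points deserving a moment's attention are the bookkeeping of which spectral variable ($\bm{z}$ or $\bm{w}$) attaches to which factor once the adjoint is taken, and the self-adjointness of $\Delta_{\bfT}$; both are immediate, which is exactly why the statement can be flagged as straightforward once Lemmas \ref{I1} and \ref{V_T*} are in hand.
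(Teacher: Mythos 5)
Your proof is correct and is exactly the argument the paper intends: the paper gives no written proof, merely remarking that the corollary follows ``straightforwardly'' from Lemmas \ref{I1} and \ref{V_T*}, which is precisely the two-step substitution you carry out. The bookkeeping of the variables after taking the adjoint matches Lemma \ref{I1} as stated, so there is nothing to add.
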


	The main result of this section is the next theorem which shows that for a $1/s$-contraction $\bfT$, the characteristic function $\theta_{\bfT}$ works as the multiplier required in Definition \ref{admitchfn2}.
	
	\begin{thm}
		Given a $1/s$-contraction $\bfT = (T_{1}, \dots,T_{d})$, its characteristic function $\theta_{\bfT}$ (defined in \eqref{chfn}) is a multiplier from $H_{s} \otimes \cD_{\tilde{\bfT}}$ to $H_{s} \otimes \overline{\rm Ran} \Delta_{\bfT}$ with $\|M_{\theta_{\bfT}}\| \leq 1$. Moreover the identity
		\begin{equation}  \label{I2}
			V_{\bfT} V_{\bfT}^{*} + M_{\theta_{\bfT}}M_{\theta_{\bfT}}^{*}  = I
		\end{equation}
		holds.
	\end{thm}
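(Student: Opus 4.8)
The plan is to verify both claims on the dense linear span of the kernel functions $\{ s_{\bm{w}} \otimes \xi : \bm{w} \in \mathbb{B}_{d},\ \xi \in \overline{\rm Ran}\,\Delta_{\bfT}\}$ in $H_{s} \otimes \overline{\rm Ran}\,\Delta_{\bfT}$, with Lemma \ref{I1} and Corollary \ref{cor1} doing the heavy lifting. The two assertions must be proved in this order, since the formula for $M_{\theta_{\bfT}}^{*}$ on kernel functions is only available once $M_{\theta_{\bfT}}$ is already known to be bounded.

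First I would show that $\theta_{\bfT}$ is a contractive multiplier. By the standard characterization of contractive multipliers between vector valued reproducing kernel Hilbert spaces (see \cite{AM}), it is enough to check that the $\cB(\overline{\rm Ran}\,\Delta_{\bfT})$-valued kernel
$$ (\bm{z}, \bm{w}) \longmapsto s(\bm{z}, \bm{w}) \big( I - \theta_{\bfT}(\bm{z}) \theta_{\bfT}(\bm{w})^{*} \big) $$
is positive semidefinite. But Lemma \ref{I1} identifies this kernel with $\Delta_{\bfT} (s_{\bm{z}}(\bfT))^{*} s_{\bm{w}}(\bfT) \Delta_{\bfT}$. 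Writing $F(\bm{w}) = s_{\bm{w}}(\bfT) \Delta_{\bfT}$ --- a bounded operator for each $\bm{w} \in \mathbb{B}_{d}$ by the norm convergence established in the functional calculus subsection --- one has $F(\bm{z})^{*} = \Delta_{\bfT}(s_{\bm{z}}(\bfT))^{*}$, so the kernel factors as $F(\bm{z})^{*} F(\bm{w})$. For any finite families $\{\bm{w}_{i}\}$ and $\{\xi_{i}\}$ this yields $\sum_{i,j} \la F(\bm{w}_{i})^{*} F(\bm{w}_{j}) \xi_{j}, \xi_{i} \ra = \big\| \sum_{j} F(\bm{w}_{j}) \xi_{j} \big\|^{2} \ge 0$, so the kernel is positive and $\|M_{\theta_{\bfT}}\| \le 1$.

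With boundedness in hand, $M_{\theta_{\bfT}}^{*}$ acts on kernel functions by $M_{\theta_{\bfT}}^{*}(s_{\bm{w}} \otimes \xi) = s_{\bm{w}} \otimes \theta_{\bfT}(\bm{w})^{*} \xi$. To prove \eqref{I2} I would then evaluate the sesquilinear form $\la (V_{\bfT} V_{\bfT}^{*} + M_{\theta_{\bfT}} M_{\theta_{\bfT}}^{*})(s_{\bm{w}} \otimes \xi), s_{\bm{z}} \otimes \eta \ra$. By Corollary \ref{cor1} the first summand is $s(\bm{z}, \bm{w}) \la (I - \theta_{\bfT}(\bm{z}) \theta_{\bfT}(\bm{w})^{*}) \xi, \eta \ra$, while the second, using the adjoint formula together with the reproducing identity $\la s_{\bm{w}}, s_{\bm{z}} \ra = s(\bm{z}, \bm{w})$, equals $s(\bm{z}, \bm{w}) \la \theta_{\bfT}(\bm{z}) \theta_{\bfT}(\bm{w})^{*} \xi, \eta \ra$. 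The two $\theta_{\bfT}$ terms cancel and the sum collapses to $s(\bm{z}, \bm{w}) \la \xi, \eta \ra = \la s_{\bm{w}} \otimes \xi, s_{\bm{z}} \otimes \eta \ra$, which is exactly $\la I (s_{\bm{w}} \otimes \xi), s_{\bm{z}} \otimes \eta \ra$. Since these kernel functions are dense and both operators are bounded, \eqref{I2} follows.

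I expect no serious obstacle here: the genuinely computational step has already been absorbed into Lemma \ref{I1} (whose proof the authors omit as a direct calculation), and what remains is essentially bookkeeping. The only points demanding care are the logical ordering noted above and the confirmation that $F(\bm{w}) = s_{\bm{w}}(\bfT) \Delta_{\bfT}$ is a bona fide bounded operator across all of $\mathbb{B}_{d}$, which is precisely what the norm convergence of the series defining $s_{\bm{w}}(\bfT)$ in the functional calculus subsection guarantees.
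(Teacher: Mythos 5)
Your proposal is correct and follows essentially the same route as the paper: both arguments rest on Lemma \ref{I1}, Lemma \ref{V_T*} and Corollary \ref{cor1}, and both reduce everything to the action of $M_{\theta_{\bfT}}^{*}$ on the dense span of the functions $s_{\bm{w}}\otimes\xi$. The only cosmetic difference is that you outsource the contractivity of $M_{\theta_{\bfT}}$ to the standard positivity criterion for multipliers (verified via the factorization $F(\bm{z})^{*}F(\bm{w})$ with $F(\bm{w})=s_{\bm{w}}(\bfT)\Delta_{\bfT}$, which by Lemma \ref{V_T*} is exactly $V_{\bfT}^{*}$ acting on kernel functions), whereas the paper proves that criterion in situ by defining the map $A(s_{\bm{w}}\otimes\xi)=s_{\bm{w}}\otimes\theta_{\bfT}(\bm{w})^{*}\xi$, bounding it by $\langle Ax,Ay\rangle=\langle(I-V_{\bfT}V_{\bfT}^{*})x,y\rangle$, and identifying $A^{*}=M_{\theta_{\bfT}}$, from which \eqref{I2} is read off in the same stroke.
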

	\begin{proof}
		Define a linear map $$A : {\rm span} \{s_{\bm{w}} \otimes \xi : \bm{w}\in\mathbb{B}_{d}, \xi\in \overline{\rm Ran} \Delta_{\bfT}\} \to H_{s} \otimes \cD_{\tilde{\bfT}} $$ by
		$$A (s_{\bm{w}}\otimes\xi)=s_{\bm{w}}\otimes\theta_{\bfT}(\bm{w})^{*}\xi. $$
		For $\bm{z},\bm{w}\in\mathbb{B}_{d}$ and $\xi , \eta \in \cE$ we get
		\begin{align} \label{AisMtheta*}
			\la A (s_{\bm{w}}\otimes\xi) , A (s_{\bm{z}}\otimes\eta) \ra
			& = \la s_{\bm{w}}\otimes\theta_{\bfT}(\bm{w})^{*}\xi, s_{\bm{z}}\otimes\theta_{\bfT}(\bm{z})^{*}\eta \ra \nonumber \\
			& = s(\bm{z},\bm{w}) \la \theta_{\bfT}(\bm{z}) \theta_{\bfT}(\bm{w})^{*} \xi , \eta \ra \nonumber \\
			& = \la ( I - V_{\bfT} V_{\bfT}^{*}) (s_{\bm{w}} \otimes \xi), s_{\bm{z}} \otimes \eta \ra
		\end{align}
		by Corollary \ref{cor1}. This implies that
		$$ \| A x\| \leq \|x\|$$ for all $x \in {\rm span} \{s_{\bm{w}} \otimes \xi : \bm{w}\in\mathbb{B}_{d}, \xi\in \overline{\rm Ran} \Delta_{\bfT}\}.$ Thus, $A$ extends to be a bounded linear operator from $H_{s} \otimes \overline{\rm Ran} \Delta_{\bfT}$ to  $H_{s} \otimes \cD_{\tilde{\bfT}}.$ Now we shall prove that $A^{*} = M_{\theta_{\bfT}}.$ For $f \in H_{s} \otimes \cD_{\tilde{\bfT}}, \xi \in \overline{\rm Ran} \Delta_{\bfT}$ and $\bm{z} \in \mathbb{B}_{d},$ we have
		$$ \la (A^{*} f) (\bm{z}) , \xi \ra  = \la A^{*}f, s_{\bm{z}} \otimes \xi \ra  = \la f , A(s_{\bm{z}} \otimes \xi ) \ra  = \la f , s_{\bm{z}} \otimes \theta_{\bfT}(\bm{z})^{*} \xi \ra  = \la \theta_{\bfT}(\bm{z}) f(\bm{z}) ,  \xi \ra.$$
		So we get $(A^{*}f)(\bm{z}) = \theta_{\bfT}(\bm{z}) f(\bm{z}).$ This implies that $A^{*} = M_{\theta_{\bfT}}.$ The identity
		$$ M_{\theta_{\bfT}} M_{\theta_{\bfT}}^{*} + V_{\bfT} V_{\bfT}^{*} = I_{H_{k} \otimes \cE} $$
		follows from \eqref{AisMtheta*}.
	\end{proof}
	
	\begin{corollary}
		Given a $1/s$-contraction $\bfT$, its characteristic function $\theta_{\bfT}$ is a bounded analytic function on $\mathbb{B}_{d}$ with $\sup\limits_{\bm{z}\in\mathbb{B}_{d}}\| \theta_{\bfT}(z)\| \leq 1$.
	\end{corollary}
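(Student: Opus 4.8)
The plan is to read this off directly from the theorem just proved, which already gives that $M_{\theta_{\bfT}}$ is a multiplier with $\|M_{\theta_{\bfT}}\| \le 1$; analyticity of $\theta_{\bfT}$ is part of its definition in \eqref{chfn}, so the only thing left to establish is the pointwise bound $\|\theta_{\bfT}(\bm{z})\| \le 1$ on $\mathbb{B}_{d}$. The underlying principle is the routine fact that the supremum of the pointwise operator norms of a reproducing-kernel multiplier is dominated by its multiplier norm, and I would realize this through the action of the adjoint multiplier on kernel functions that was computed inside the proof of the theorem.

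First I would recall that the operator $A$ built there equals $M_{\theta_{\bfT}}^{*}$ and acts by $A(s_{\bm{w}} \otimes \xi) = s_{\bm{w}} \otimes \theta_{\bfT}(\bm{w})^{*}\xi$ for every $\bm{w} \in \mathbb{B}_{d}$ and $\xi \in \overline{\rm Ran} \Delta_{\bfT}$. Fixing such $\bm{w}$ and $\xi$ and using $\|s_{\bm{w}}\|^{2} = s(\bm{w},\bm{w})$, the image has squared norm $s(\bm{w},\bm{w})\,\|\theta_{\bfT}(\bm{w})^{*}\xi\|^{2}$, while the source vector satisfies $\|s_{\bm{w}} \otimes \xi\|^{2} = s(\bm{w},\bm{w})\,\|\xi\|^{2}$. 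Since $\|A\| = \|M_{\theta_{\bfT}}\| \le 1$, this yields
\begin{equation*}
s(\bm{w},\bm{w})\,\|\theta_{\bfT}(\bm{w})^{*}\xi\|^{2} = \|A(s_{\bm{w}} \otimes \xi)\|^{2} \le \|s_{\bm{w}} \otimes \xi\|^{2} = s(\bm{w},\bm{w})\,\|\xi\|^{2}.
\end{equation*}

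Because $s(\bm{w},\bm{w}) = \sum_{n=0}^{\infty} a_{n}\|\bm{w}\|^{2n} \ge a_{0} = 1 > 0$, I would cancel this factor to get $\|\theta_{\bfT}(\bm{w})^{*}\xi\| \le \|\xi\|$ for all $\xi$, hence $\|\theta_{\bfT}(\bm{w})^{*}\| \le 1$ and therefore $\|\theta_{\bfT}(\bm{w})\| \le 1$; taking the supremum over $\bm{w} \in \mathbb{B}_{d}$ finishes the argument. I do not expect any genuine obstacle here, since all the analytic content sits in the preceding theorem. The only point needing a moment of care is the strict positivity of $s(\bm{w},\bm{w})$ used to cancel the kernel value, and this is immediate from $a_{0} = 1$ together with the positivity of the coefficients.
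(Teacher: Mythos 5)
Your proposal is correct and is exactly the routine argument the paper leaves implicit: the corollary is stated without proof as an immediate consequence of the preceding theorem, and the standard derivation is precisely your computation with $M_{\theta_{\bfT}}^{*}(s_{\bm{w}}\otimes\xi)=s_{\bm{w}}\otimes\theta_{\bfT}(\bm{w})^{*}\xi$ together with $s(\bm{w},\bm{w})\geq 1>0$. (An equally quick route the paper makes available is Lemma \ref{I1} with $\bm{z}=\bm{w}$, which gives $I-\theta_{\bfT}(\bm{w})\theta_{\bfT}(\bm{w})^{*}=\frac{1}{s(\bm{w},\bm{w})}\Delta_{\bfT}(s_{\bm{w}}(\bfT))^{*}s_{\bm{w}}(\bfT)\Delta_{\bfT}\geq 0$ directly, without invoking the multiplier norm.)
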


	\section*{Epilogue}
	In this note, we have proved the existence of characteristic function for a certain class, but did not show any utility. This will be the theme of a future paper. However, the principal utility of the characteristic function in the $pure$ case is a straightforward consequence of what we have developed so far.
	
	\begin{definition*}
		Given two $1/s$-contractions $\bfT$ and $\bfR$ on Hilbert spaces $\cH$ and $\cK$ respectively, the characteristic functions of $\bfT$ and $\bfR$ are said to coincide if there exist unitary operators $\tau:\cD_{\tilde{\bfT}}\to \cD_{\tilde{\bfR}}$ and $\tau_{*}: \overline{\rm Ran} \Delta_{\bfT} \to \overline{\rm Ran} \Delta_{\bfR}$ such that the following diagram commutes for all $\bm{z}\in\mathbb{B}_{d}$:
		\[
		\begin{CD}
			\cD_{\tilde{\bfT}}@>\theta_{\bfT}(\bm{z})>> \overline{\rm Ran} \Delta_{\bfT}\\
			@V\tau VV@VV\tau_{*}V\\
			\cD_{\tilde{\bfR}}@>>\theta_{\bfR}(\bm{z})> \overline{\rm Ran} \Delta_{\bfR}
		\end{CD}
		\]
	\end{definition*}
	
	The characteristic functions of two unitarily equivalent $1/s$-contractions clearly coincide. It is somewhat of a surprise that the converse is true for at least pure  $1/s$-contractions. This is achieved through the construction of a functional model. Using the isometry $ V_{\bfT}$ as well as the identity \eqref{I2}, we get that every pure $1/s$-contraction $\bfT = (T_{1},\hdots,T_{d})$ acting on a Hilbert space $\cH$ is unitarily equivalent to the commuting tuple $\mathbb{T} = ( \mathbb{T}_{1}, \hdots,\mathbb{T}_{d})$ on the functional space
	$$\mathbb{H}_{\bfT}=(H_{s} \otimes \overline{\rm Ran} \Delta_{\bfT}) \ominus M_{\theta_{\bfT}}(H_{s} \otimes \cD_{\tilde{\bfT}})$$
	defined by $\mathbb{T}_{i}=P_{\mathbb{H}_{\bfT}}(M_{z_{i}} \otimes I_{\cD_{\tilde{\bfT}^{*}}})|_{\mathbb{H}_{\bfT}}$ for $1 \leq i \leq d.$  This functional model produces the following theorem whose proof is along the lines of the contents of \cite{BES}.
	
	\begin{theorem*}
		Two pure $1/s$-contractions are unitarily equivalent if and only if their characteristic functions coincide.
	\end{theorem*}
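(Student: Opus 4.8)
The plan is to prove the nontrivial (converse) implication by transporting the functional model, and to record the forward implication for completeness. First I would pin down the two model unitaries. Since $\bfT$ and $\bfR$ are pure, $V_{\bfT}$ and $V_{\bfR}$ of Theorem \ref{V_T} are isometries, so \eqref{I2} forces $V_{\bfT}V_{\bfT}^{*}=I-M_{\theta_{\bfT}}M_{\theta_{\bfT}}^{*}$ to be the projection onto the orthogonal complement of $\overline{\rm Ran}\,M_{\theta_{\bfT}}$, which is exactly $\mathbb{H}_{\bfT}$. Hence $V_{\bfT}:\cH\to\mathbb{H}_{\bfT}$ and $V_{\bfR}:\cK\to\mathbb{H}_{\bfR}$ are onto unitaries, and the intertwining relation of Theorem \ref{V_T} together with the co-invariance of $\mathbb{H}_{\bfT}$ (its orthocomplement $\ker V_{\bfT}^{*}$ is invariant under $\bfM_{\bm{z}}\otimes I$) gives $\mathbb{T}_{i}=V_{\bfT}T_{i}V_{\bfT}^{*}$ and $\mathbb{R}_{i}=V_{\bfR}R_{i}V_{\bfR}^{*}$. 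Thus $\bfT\cong\mathbb{T}$ and $\bfR\cong\mathbb{R}$, and it suffices to manufacture a unitary $W:\mathbb{H}_{\bfT}\to\mathbb{H}_{\bfR}$ with $W\mathbb{T}_{i}=\mathbb{R}_{i}W$ for each $i$; then $U=V_{\bfR}^{*}WV_{\bfT}$ implements $\bfT\cong\bfR$.

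Next, assume the characteristic functions coincide, so there are unitaries $\tau:\cD_{\tilde{\bfT}}\to\cD_{\tilde{\bfR}}$ and $\tau_{*}:\overline{\rm Ran}\Delta_{\bfT}\to\overline{\rm Ran}\Delta_{\bfR}$ with $\tau_{*}\theta_{\bfT}(\bm{z})=\theta_{\bfR}(\bm{z})\tau$ for all $\bm{z}\in\mathbb{B}_{d}$. I would promote these to the ampliations $I_{H_{s}}\otimes\tau$ and $I_{H_{s}}\otimes\tau_{*}$ on the relevant vector-valued spaces. Because $\tau,\tau_{*}$ are constant in $\bm{z}$, these ampliations commute with multiplication by the scalar coordinate functions, so they intertwine $M_{z_{i}}\otimes I$ on domain and target; and reading the coincidence relation coefficientwise yields the multiplier identity $(I_{H_{s}}\otimes\tau_{*})M_{\theta_{\bfT}}=M_{\theta_{\bfR}}(I_{H_{s}}\otimes\tau)$. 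Since $I_{H_{s}}\otimes\tau$ is unitary, this identity shows that $I_{H_{s}}\otimes\tau_{*}$ carries $\overline{\rm Ran}\,M_{\theta_{\bfT}}$ onto $\overline{\rm Ran}\,M_{\theta_{\bfR}}$, hence its orthogonal complement $\mathbb{H}_{\bfT}$ onto $\mathbb{H}_{\bfR}$.

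I then define $W$ to be the restriction of $I_{H_{s}}\otimes\tau_{*}$ to $\mathbb{H}_{\bfT}$, a unitary onto $\mathbb{H}_{\bfR}$ by the previous step. Using co-invariance of $\mathbb{H}_{\bfT}$ and $\mathbb{H}_{\bfR}$, one has $\mathbb{T}_{i}^{*}=(M_{z_{i}}^{*}\otimes I)|_{\mathbb{H}_{\bfT}}$ and $\mathbb{R}_{i}^{*}=(M_{z_{i}}^{*}\otimes I)|_{\mathbb{H}_{\bfR}}$, and since $I_{H_{s}}\otimes\tau_{*}$ intertwines the shift adjoints, $W\mathbb{T}_{i}^{*}=\mathbb{R}_{i}^{*}W$; taking adjoints gives $W\mathbb{T}_{i}=\mathbb{R}_{i}W$, completing the converse. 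For the forward direction, a unitary $U$ with $UT_{i}=R_{i}U$ intertwines every operator built functionally from the $T_{i},T_{i}^{*}$; in particular $U\Delta_{\bfT}=\Delta_{\bfR}U$, while $\oplus U$ intertwines $\tilde{\bfT},\tilde{\bfR}$ and hence $D_{\tilde{\bfT}},D_{\tilde{\bfR}}$, so the restrictions of $U$ to $\overline{\rm Ran}\Delta_{\bfT}$ and of $\oplus U$ to $\cD_{\tilde{\bfT}}$ supply unitaries intertwining $\theta_{\bfT}$ with $\theta_{\bfR}$, i.e.\ the characteristic functions coincide.

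The main obstacle is the converse, and its crux is that the \emph{constant} coincidence unitaries $\tau,\tau_{*}$ lift to multiplier-level intertwiners: once $(I_{H_{s}}\otimes\tau_{*})M_{\theta_{\bfT}}=M_{\theta_{\bfR}}(I_{H_{s}}\otimes\tau)$ is in hand, matching the model spaces and transporting the compressed tuples is routine. I expect the only genuine care to be bookkeeping, namely checking that the ampliations are well defined and unitary between the correct vector-valued spaces and that it is the \emph{closed} range $\overline{\rm Ran}\,M_{\theta_{\bfT}}$ that is carried onto $\overline{\rm Ran}\,M_{\theta_{\bfR}}$; the latter is immediate since $I_{H_{s}}\otimes\tau_{*}$ is unitary and the multiplier identity already matches the ranges before closure.
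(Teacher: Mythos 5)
Your proposal is correct and follows exactly the route the paper indicates: the functional model built from the isometry $V_{\bfT}$ and the identity \eqref{I2} identifies $\bfT$ with the compression of $\bfM_{\bm z}\otimes I$ to $\mathbb{H}_{\bfT}$, and coincidence of the characteristic functions is transported via the ampliations $I_{H_s}\otimes\tau$, $I_{H_s}\otimes\tau_*$, precisely the argument of \cite{BES} that the paper cites. No substantive differences to report.
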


\vspace*{2mm}
	
	{\small{ \begin{centerline}
				{\textsc{Acknowledgement}}
			\end{centerline}
			\noindent The seed for this work was sown by a question of Dmitry Yakubovich whose hospitality and the support by the Severo Ochoa program are gratefully acknowledged by the first named author. The first named author is supported by a J C Bose Fellowship JCB/2021/000041 of SERB. The second named author is supported by the Prime Minister's Research Fellowship PM/MHRD-20-15227.03.

\end{document}